\def\MT@register@subst@font{\MT@exp@one@n\MT@in@clist\font@name\MT@font@list 
  \ifMT@inlist@\else\xdef\MT@font@list{\MT@font@list\font@name,}\fi}
\declaretheoremstyle[%
  spaceabove=6pt,spacebelow=6pt,%
  headfont=\normalfont\bfseries,%
  bodyfont=\normalfont\itshape%
]{thmstyle}
\declaretheorem[numberwithin=section, name=Theorem, refname={theorem,theorems}, Refname={Theorem,Theorems}, style=thmstyle]{theorem}
\declaretheorem[sibling=theorem, name=Lemma, refname={lemma,lemmas}, Refname={Lemma,Lemmas}, style=thmstyle]{lemma}
\declaretheorem[sibling=lemma, name=Corollary, refname={corollary,corollaries}, Refname={Corollary,Corollaries}]{corollary}
\declaretheorem[numberwithin=section, style=definition, name=Definition, refname={definition,definitions}, Refname={Definition,Definitions}]{definition}
\def\pfunctionspace{\mathring{B}_2^{0,1}}
\def\chigammaspace{B_2^{1,1}}
\def\scontrolspace{B_2^2}
\def\sdiscretecontrolspace{b_2^2}
\def\gcontrolspace{B_2^1}
\def\gdiscretecontrolspace{b_2^1}
\def\fcontrolspace{L_2}
\def\fdiscretecontrolspace{{\l}_{2}}
\def\bcontrolspace{B_2^{1+\epsilon}}
\def\bdiscretecontrolspace{b_2}
\def\ccontrolspace{B_2^{1+\epsilon}}
\def\cdiscretecontrolspace{b_2}
\def\controlVars{s, g, f, b, c}
\def\controlVarsWithN{s^n, g^n, f^n, b^n, c^n}
\def\controlVarsWithNSub{s_n, g_n, f_n, b_n, c_n}
\def\controlVarsWithTilde{\tilde{s}, \tilde{g}, \tilde{f}, \tilde{b}, \tilde{c}}
\def\controlVarsWithStar{s_*, g_*, f_*, b_*, c_*}
\def\controlSpace{H}
\def\controlSpaceFull{\scontrolspace{} (0,T)\times{} \gcontrolspace{} (0,T)\times{}
  \fcontrolspace{} (D)\times{} \bcontrolspace{} (D) \times{} \ccontrolspace{} (D)}
\let\controlSpaceWeaklyConverge\controlSpaceFull%
\def\controlVarsStronglyConverge{s, g, b, c}
\def\controlVarsStronglyConvergeWithTilde{\tilde{s}, \tilde{g}, \tilde{b}, \tilde{c}}
\def\controlVarsStronglyConvergeWithN{s^n, g^n, b^n, c^n}
\def\controlSpaceStronglyConverge{B_2^1(0,T) \times{} L_2(0,T) \times{} L_2(D) \times{} L_2(D)}
\def\discreteControlVars{[s]_n, [g]_n, [f]_{nN}, [b]_{n}, [c]_{n}}
\def\discreteControlSet{V_R^n}
\newcommand{\D}[2]{\frac{\partial{} #1}{\partial{} #2}}
\newcommand{\bk}[1]{\left\{#1\right\}}
\DeclareMathOperator*{\esssup}{ess\,sup}
\newcommand{\ZZ}{\mathbf{Z}}
\newcommand{\norm}[1]{\left\Vert{} #1\right\Vert}
\newcommand{\lnorm}[1]{\left\vert{} #1\right\vert}
\def\probI{\mathcal{I}}
\def\J{\mathcal{J}}
\def\P{\mathcal{P}}
\def\Q{\mathcal{Q}}
\def\probIn{\mathcal{I}_n}
\def\fIn{I_n}
\DeclareMathOperator{\Ind}{\mathbf{1}}
\let\l\ell%
\begin{document}
\title{Optimal Control of Coefficients in Parabolic Free Boundary Problems Modeling Laser Ablation}%
\author{Ugur G.\ Abdulla}%
\author{Jonathan Goldfarb}%
\thanks{*Mathematical Sciences Department, Florida Institute of Technology, Florida, United States of America}%
%
%
%
%
\begin{abstract}{%
Inverse Stefan problem arising in modeling of laser ablation of biomedical tissues is analyzed, where information on the coefficients, heat flux on the fixed boundary, and density of heat sources are missing and must be found along with the temperature and free boundary.
Optimal control framework is employed, where the missing data and the free boundary are components of the control vector, and optimality criteria are based on the final moment measurement of the temperature and position of the free boundary.
Discretization by finite differences is pursued, and convergence of the discrete optimal control problems to the original problem is proven.
}%
\end{abstract}%
\keywords{Inverse Stefan problem, optimal control, PDE constrained optimization, %
  second order parabolic PDE, Sobolev spaces, energy estimate, embedding theorems, %
  traces of Sobolev functions, method of finite differences, convergence in functional, %
  convergence in control%
  }%
%
%
\maketitle%
\section{Introduction and Motivation}\label{sec:introduction}
Consider the general one-phase Stefan problem~\cites{friedman88,meirmanov92}: find the temperature function $u(x,t)$ and the free boundary $x=s(t)$ from the following conditions:
\begin{gather}
  Lu\equiv {(a u_{x})}_{x} + b u_{x} + c u - u_{t}=f-\D{p}{x},\quad \text{in}~\Omega=\bk{(x,t):0 < x < s(t),~0 < t \leq T}\label{eq:intro-pde}
  \\
  u(x,0)=\phi(x),\quad 0\leq x \leq s(0)=:s_{0}\label{eq:intro-iv}
  \\
  a(0,t)u_{x}(0,t)=g(t),\quad 0\leq t \leq T\label{eq:intro-bvl}
  \\
  a\big(s(t),t\big)u_{x}\big(s(t),t\big) + \gamma\big( s(t),t\big)s'(t)=\chi\big( s(t),t\big),\quad 0\leq t \leq T\label{eq:intro-stefan}
  \\
  u\big( s(t),t\big)=\mu(t), \quad 0\leq t \leq T,\label{eq:intro-bvr}
  \end{gather}
  where $a$, $b$, $c$, $f$, $p$, $\phi$, $g$, $\gamma$, $\chi$, $\mu$ are known functions $a(x,t)\geq a_{0}>0$, $s_{0}>0$.
In the context of heat conduction, $\gamma$ represents latent heat absorbed or released by the melting or freezing at the boundary, $\chi$ a heat source or sink on the boundary, $f$ and $p$ characterize the density of the sources, $\phi$ is the initial temperature, $g$ is the heat flux on the fixed boundary $x=0$, and $\mu$ is the phase transition temperature.
The coefficients $a$, $b$, and $c$ represent the diffusive, convective, and reactive properties, respectively, in the domain $\Omega$.

Assume now that some of the data is not available, or involves some measurement error.
For example, assume that the coefficients $b$ and $c$, heat flux $g(t)$ on the fixed boundary $x=0$ and the ``regular part'' of the density of heat sources, $f(x,t)$ are unknown and must be found along with the temperature $u(x,t)$ and the free boundary $x=s(t)$.
In this case, additional information is required; assume that this information is provided in the form of a measurement of temperature and the position of the free boundary at the final time $t = T$,
\begin{equation}
  u(x,T) = w(x),\quad
  0 \leq t \leq s(T)=:\bar{s}\label{eq:intro-meas-endtime}
\end{equation}

Under these conditions, we are required to solve an \textbf{Inverse Stefan Problem (ISP)}: find functions $u(x,t)$ and $s(t)$, the boundary heat flux $g(t)$, and the density of sources $f(x,t)$ satisfying conditions~\eqref{eq:intro-pde}--\eqref{eq:intro-meas-endtime}.

Motivation for this type of inverse problem arose, in particular, from the modeling of bioengineering problems on the laser ablation of biological tissues through a Stefan problem~\eqref{eq:intro-pde}--\eqref{eq:intro-meas-endtime}, where $s(t)$ is the ablation depth at the moment $t$.
The unknown parameters of the model such as $b$, $c$, $g$, and $f$ are very difficult to measure through experiments.
Lab experiments pursued on laser ablation of biological tissues allow the measure of final temperature distribution and final ablation depth; consequently, ISP must be solved for the identification of some, or all, of the unknown parameters $a$, $b$, $c$, $g$, $f$, etc.\@

Still another important motivation arises from the optimal control of the laser ablation process.
A typical control problem arises when unknown control parameters, such as the intensity of the laser source $f$, heat flux $g$ on the known boundary, and the coefficients $b$, $c$, must be chosen with the purpose of achieving a desired ablation depth and temperature distribution at the end of the time interval.

Research into inverse Stefan problems proceeded in two directions: inverse Stefan problems with given phase boundaries
in~\cites{bell81,budak72,budak73,budak74,cannon64,carasso82,ewing79,ewing79a,goldman97,hoffman81,sherman71}, or inverse problems with unknown phase boundaries in~\cites{vasilev69,baumeister80,fasano77,goldman97,hoffman82,hoffman86,jochum80,jochum80a,knabner83,lurye75,niezgodka79,nochetto87,primicerio82,sagues82,talenti82, yurii80}.

In~\cite{abdulla13},~\cite{abdulla15}, a new variational formulation of ISP was
introduced and existence of a solution as well as convergence of the method of finite differences was proven.
Fr\'echet differentiability in Besov spaces in the new variational formulation was proven in~\cite{abdulla17},~\cite{abdulla16}.
The goal of this project is to extend the results of~\cite{abdulla15} on the existence of a solution and convergence
of the method of finite differences to the identification of $f$, $b$, and $c$.

The structure of the remainder of the paper is as follows: in Section~\ref{sec:notation} we define all
the functional spaces.
Section~\ref{sec:isp-optcont-formulation} formulates the optimal control
problem; the discrete optimal control problem is formulated in
Section~\ref{sec:fullydisc-optimal-control-prob}.
The main results are formulated in Section~\ref{sec:main-result-formulation}.
In Section~\ref{sec:preliminary-results-disc} preliminary results are proven.
The proofs of the main results are elaborated in Section~\ref{sec:main-results}.
Finally, conclusions are presented in Section~\ref{sec:conclusions}.

\subsection{Notation}\label{sec:notation}
Let $U$ be open subset of the real line $\Re$.
\begin{itemize}
  \item The Sobolev-Besov space $B_{2}^{k}(U)$, for $k = 1, 2, \ldots$ is the Banach space of $L_{2}(U)$ functions whose weak derivatives up to order $k$ exist and are in $L_{2}(U)$.
  The norm in $B_{2}^{k}(U)$ is
  \[
    \norm{u}_{B_2^k(U)}^2
    := \sum_{i=0}^{k} \norm{\frac{d^k u}{dx^k}}_{L_2(U)}^2
  \]
  \item For $\l \not\in \ZZ_+$, $B_2^{\l}(U)$ is the Banach space of measurable functions with finite norm
  \[
    \norm{u}_{B_2^{\l}(U)}
    :=
    \norm{u}_{B_2^{(\l)}(U)}
    + [u]_{B_2^{\l}(U)}
  ~\text{where}~
    [u]_{B_2^\l(U)}^2
    :=\int_{U} \int_{U} \frac{
    \lnorm{
      \D{^{[\l]} u(x)}{x^{[\l]}}
      - \D{^{[\l]} u(y)}{x^{[\l]}}
    }^2
    }{
    \lnorm{x-y}^{1 + 2(\l - [\l])}
    } \,dx \,dy
  \]
\item Let $\l_1,\l_2>0$ and $D = U \times (0,T)$.
The Besov space $B_{2}^{\l_1, \l_2}(D)$ is defined as the closure of the set of smooth functions under the norm
\[
    \norm{u}_{B_{2}^{\l_1, \l_2}(D)}
    := \left(\int_0^T \norm{u(x,t)}_{B_2^{\l_1}(U)}^2 \,dt\right)^{1/2}
    + \left(\int_{U} \norm{u(x,t)}_{B_2^{\l_2}(0,T)}^2 \,dx\right)^{1/2}.
\]
When $\l_{1} = \l_{2} \equiv \l$, the corresponding Besov space is denoted by $B_{2}^{\l}(D)$.
$\mathring{B}_2^{\l_{1},\l_{2}}(D)$ denotes the closure of the set of smooth functions with compact support with respect to $x$ in $U$ under the $B_{2}^{\l_{1},\l_{2}}$-norm.
  \item $V_{2}(\Omega)$ is the subspace of $B_{2}^{1,0}(\Omega)$ for which the norm
  \[
    \norm{u}_{V_{2}(\Omega)}^2
    = \esssup_{0\leq t \leq T} \norm{u(\cdot, t)}_{L_{2}\big(0,s(t)
    \big)}^{2}
    + \norm{\D{u}{x}}_{L_{2}(\Omega)}^{2}
  \]
  \item $V_{2}^{1,0}(\Omega)$ is the completion of $B_{2}^{1,1}(\Omega)$ in the $V_{2}(\Omega)$ norm.
  $V_{2}^{1,0}(\Omega)$ is a Banach space with norm
  \[
    \norm{u}_{V_{2}^{1,0}(\Omega)}^2
    = \max_{0\leq t \leq T} \norm{u(\cdot, t)}_{L_{2}\big(0,s(t)\big)}^{2}
    + \norm{\D{u}{x}}_{L_{2}(\Omega)}^{2}
  \]
\end{itemize}
In the next section we describe the new variational formulation of this inverse problem.
\section{Variational Formulation of ISP}\label{sec:isp-optcont-formulation}
Fix any $R > 0, 0 < \epsilon \ll 1$ and nonnegative numbers $\beta_i, i=0,1,2$. Consider
\begin{gather}
  \J(v)
  = \beta_{0}\int_0^{s(T)}\lnorm{u(x,T)-w(x)}^2 \,dx
  + \beta_{1}\int_0^T\lnorm{u\big(s(t),t\big)-\mu(t)}^{2}\,dt
  + \beta_{2}|s(T)-\bar{s}|^{2}\label{eq:opt-functional}
\end{gather}
over the control set $V_{R}$:
\begin{gather}
  V_{R}=\Big{\{}
  v=(\controlVars{}) \in \controlSpace{}:
  ~0<\delta \leq s(t),
  ~s(0)=s_{0},
  ~s'(0)=0,
  \norm{v}_{\controlSpace} \leq R
  \Big{\}},\label{eq:opt-controlset}\nonumber
  \\
  \controlSpace{} = \controlSpaceFull{},\nonumber
  \\
  \norm{v}_{\controlSpace}
  :=
  \max\left(
  \norm{s}_{\scontrolspace(0,T)};
  \norm{g}_{\gcontrolspace(0,T)};
  \norm{f}_{\fcontrolspace(D)};
  \norm{b}_{\bcontrolspace(D)};
  \norm{c}_{\ccontrolspace(D)}
  \right)
\end{gather}
where $D$ is defined by
\[
  D := \bk{(x,t) : 0\leq x\leq \l,~ 0\leq t\leq T},
\]
where $\l=\l(R)>0$ is chosen such that for any control $v\in V_R$, its component
$s$ satisfies $s(t)\leq \l$.
Existence of appropriate $\l$ follows from Morrey's
inequality~\cites{ladyzhenskaya68,besov79}.
Let the function $f \in \fcontrolspace(D)$ be extended to $\fcontrolspace(\Re^2)$
by zero.
For given $v \in V_R$ the state vector $u = u(x,t;v)$ solves~\eqref{eq:intro-pde}--\eqref{eq:intro-stefan}.
Call the previous minimization problem by Problem $\probI$.

\begin{definition}\label{defn:intro-weak-w211}
  $u \in B_{2}^{1,1}(\Omega)$ is called a weak solution of the problem~\eqref{eq:intro-pde}--\eqref{eq:intro-stefan} if $u(x,0)=\phi(x) \in B_{2}^{1}(0,s_{0})$ and
  \begin{gather}
    0=\int_{0}^{T}\int_{0}^{s(t)}\big[
      au_{x}\Phi_{x}
      - bu_{x}\Phi
      - cu\Phi
      + u_{t}\Phi
      + f\Phi
      + p \Phi_x
      \big]\, dx\,dt\nonumber
    \\
    +\int_{0}^{T}\left[
      {\gamma\big(s(t),t\big)s'(t)}
      - {\chi\big(s(t),t\big)}\right]\Phi(s(t),t)\,dt
    + \int_{0}^{T}g(t)\Phi(0,t)\,dt\label{eq:intro-weak-w211}
  \end{gather}
  for any $\Phi \in B_{2}^{1,1}(\Omega)$.
\end{definition}
\begin{definition}\label{defn:intro-weak-v2}
  $u \in V_{2}(\Omega)$ is called a weak solution of the problem~\eqref{eq:intro-pde}--\eqref{eq:intro-stefan} if $u(x,0)=\phi(x) \in B_{2}^{1}(0,s_{0})$ and
  \begin{gather}
    0
    =\int_{0}^{T}\int_{0}^{s(t)}\big[
      au_{x}\Phi_{x}
      - bu_{x}\Phi
      - cu\Phi
      + u_{t}\Phi
      + f\Phi
      + p \Phi_x
      \big]\, dx\,dt
    -\int_{0}^{s(0)}\phi(x)\Phi(x,0)\,dx\nonumber
    \\
    +\int_{0}^{T}g(t)\Phi(0,t)\, dt 
    +\int_{0}^{T}\left[
      {\gamma\big(s(t),t\big)s'(t)}-u\big( s(t),t\big)s'(t)-{\chi\big(s(t),t\big)}
      \right]\Phi(s(t),t) \, dt\label{eq:intro-weak-v2}
  \end{gather}
  for any $\Phi \in B_{2}^{1,1}(\Omega)$ with $\Phi(x,T) = 0$.
\end{definition}


\section{Discrete Optimal Control Problem}\label{sec:fullydisc-optimal-control-prob}

Let
\[
  \omega_{\tau}=\bk{ t_{j}=j  \tau,~j=0,1,\ldots,n}
\]
be a grid on $[0,T]$ and $\tau=\frac{T}{n}$.
We will use the standard notation for differences of a sequence $\bk{d_{i}}$,
\begin{equation}
  d_{k,\bar{t}}
  =\frac{d_{k}-d_{k-1}}{\tau}
  ,\quad d_{kt}
  =d_{k+1,\bar{t}}
  ,\quad d_{k,\bar{t}t}
  =\frac{d_{k+1, \bar{t}}-d_{k,\bar{t}}}{\tau}
\end{equation}
Let us now introduce the spatial grid.
Given a discrete boundary $[s]_n\in \Re^{n+1}$, let $(p_0,p_1,\ldots,p_n)$ be a
permutation of $(0,1,\ldots,n)$ according to the order
\(
  s_{p_0}\leq s_{p_1}\leq \cdots \leq s_{p_n}
\).
In particular, according to this permutation for arbitrary $k$ there exists a unique $j_k$ such that
\(
  s_k=s_{p_{j_k}}
  \).
  Furthermore, unless it is necessary in the context, we are going to write simply
$j$ instead of subscript $j_k$.
Let
\[
  \omega_{p_0}=\{ x_{i}: x_i = i h,~i=0,1,\ldots,m_0^{(n)}\}
\]
be a grid on $[0,s_{p_0}]$ and $h=\frac{s_{p_0}}{m_0^{(n)}}$.
Furthermore we assume that
\begin{equation}
  h = O(\sqrt{\tau}), \quad \text{as}~ \tau \to 0.\label{eq:htau}
\end{equation}
We continue construction of the spatial grid by induction.
Having constructed $\omega_{p_{k-1}}$ on $[0,s_{p_{k-1}}]$ we construct
\[
  \omega_{p_k}=\{ x_i:~i=0,1,\ldots, m_k^{(n)} \}
\]
on $[0,s_{p_{k}}]$, where $m_k^{(n)}\geq m_{k-1}^{n}$, and inequality is strict if and only if $s_{p_{k}}>s_{p_{k-1}}$; for $i\leq m_{k-1}^{(n)}$ points $x_i$ are the same as in grid $\omega_{p_{k-1}}$.
Finally, if $s_{p_{n}}<\l$, then we introduce a grid on $[s_{p_n},\l]$
\[
  \overline{\omega}=\{x_i: x_i=s_{p_n}+(i-m_n^{(n)}) \overline{h},
  ~i=m_n^{(n)},\ldots, N \}
\]
of stepsize order $h$, i.e.\ $\overline{h}=O(h)$ as $h \to 0$.
Furthermore we simplify the notation and write $m_k^{(n)}\equiv m_k$.
Let
\[
  h_i=x_{i+1}-x_i, ~i=0,1,\ldots,N-1;
\]
and denote the space grid on $[0,\l]$ by $\omega_h$ and set
\(
  \Delta = \max_{i=0,\ldots,N-1} h_i
\)
Assume that
\(
m_k \to +\infty
\),
as
\(n\to \infty{}\).
Introduce the Steklov averages
\[
  h_{k}=\frac{1}{\tau}\int_{t_{k-1}}^{t_{k}}h(t)\,dt,\quad
  w_i = \frac{1}{h_i} \int_{x_i}^{x_{i+1}} w(x) \,dx,\quad
  d_{ik}=\frac{1}{h_i \tau} \int_{x_i}^{x_{i+1}}\int_{t_{k-1}}^{t_k} d(x,t)\,dt\,dx,
\]
where $i=0,1,\ldots,N-1$; $k=1,\ldots,n$; and $d$ stands for any of the functions
$a$, $p$, or $f$; and $h$ stands for any of the functions $\nu$, $\mu$, $g$, etc.\@
Define
\begin{gather}
  \discreteControlSet{} = \Big{\{}
  [v]_n = (\discreteControlVars{}) \in \bar{H}:~
  0<\delta \leq s_k;
  ~\norm{[v]_n}_{\bar{H}} \leq R
  \Big{\}}\label{eq:fully-discrete-control-set}
\end{gather}
where
\begin{gather*}
  \bar{H}:=\Re^{n + 1} \times \Re^{n + 1} \times \Re^{nN} \times
  \Re^{n+1} \times \Re^{n+1}
  \\
  \norm{[v]_n}_{\bar{H}} := \max \left(
  \norm{[s]_{n}}_{\sdiscretecontrolspace};
  \norm{[g]_{n}}_{\gdiscretecontrolspace};
  \norm{[f]_{nN}}_{\fdiscretecontrolspace};
  \norm{[b]_{n}}_{\cdiscretecontrolspace};
  \norm{[c]_{n}}_{\cdiscretecontrolspace}
  \right),
\end{gather*}
and
\begin{gather*}
  \norm{[g]_{n}}_{\gdiscretecontrolspace}^{2}
  = \sum_{k=0}^{n-1}\tau g_{k}^{2}
  + \sum_{k=1}^{n}\tau g_{k,\bar{t}}^{2},\quad
  \norm{[s]_{n}}_{\sdiscretecontrolspace}^{2}
  = \norm{[s]_n}_{\gdiscretecontrolspace}^2
  + \sum_{k=1}^{n-1}\tau s_{k,\bar{t}t}^{2},
  \\
  \norm{[f]_{nN}}_{\fdiscretecontrolspace{}}^2
  = \sum_{k=1}^n \sum_{i=0}^{N-1} \tau h_i f_{ik}^2,
  \quad
  \norm{[d]_{n}}_{\cdiscretecontrolspace{}}^2 =
  \sum_{k=0}^n \lnorm{d_k}^2,
\end{gather*}
where $s_{k} \equiv s_0$ for $k \leq 1$ and $d$ stands for either of $b$, $c$.
Let \(\bk{\psi_k,~k=0,1,\ldots}\) be an orthonormal set in
\(\bcontrolspace(D)\), and for simplicity of notation, denote the inner product
on the Hilbert space \(\bcontrolspace(D)\) by \(\langle{}
\cdot, \cdot \rangle_{\bcontrolspace}\).
Introduce two mappings $\Q{}_n$ and $\P{}_n$ between continuous and discrete control sets: Define
\(
  \Q{}_n(v)
  \)
  for $v \in V_R$ by $s_k=s(t_k)$ for $k=2,\ldots,n$, $g_k=g(t_k)$ for $k=0,1,\ldots,n$, and
\begin{gather*}
  f_{ik}=\frac{1}{h_i \tau} \int_{t_{k-1}}^{t_k} \int_{x_i}^{x_{i+1}} f(x,t)
  \,dx \,dt,\quad k=0,\ldots,n,~i=0,\ldots,N,
\end{gather*}
and
\(
d_{k} = \langle{} d, \psi_k\rangle_{\bcontrolspace}
\) for \(k=0,1,\ldots{}\)
where $d$ stands for either of $b$ or $c$.
Define
\(
  \P{}_n([v]_n)=v^n=(\controlVarsWithN)\in \controlSpace
\)
for $[v]_n \in \discreteControlSet{}$ by
\begin{gather}\label{Eq:W:1:11}
  s^n(t)=
    s_{k-1}+\left(t-t_{k-1}-\frac{\tau}{2}\right)s_{k-1, \bar{t}}+\frac{1}{2}(t-t_{k-1})^2
    s_{k-1, \bar{t}t},~t_{k-1}\leq t \leq t_k,~ k=\overline{1,n}.
  \\
  g^n(t)=g_{k-1}+g_{k,\bar{t}}(t-t_{k-1}),~t_{k-1} \leq t \leq t_k,
  k=\overline{1,n},\nonumber
  \\
  f^n(x,t) = f_{ik}, ~x_i \leq x < x_{i+1}, ~t_{k-1} \leq t <
  t_k, ~i=\overline{0,N-1}, ~k=\overline{1,n}\nonumber
  \\
  d^{n}(x,t) = \sum_{k=0}^n d_k \psi_k(x,t)\nonumber
\end{gather}
where $d$ is either of $b$ or $c$.
Given $v=(\controlVars{}) \in V_R$, we define the Steklov averages of traces by
\begin{equation}\label{eq:stek-avg-trace}
\chi^{k}_s=\frac{1}{\tau} \int_{t_{k-1}}^{t_{k}}\chi(s(t),t) \,dt,\quad
(\gamma_s s')^k=\frac{1}{\tau} \int_{t_{k-1}}^{t_{k}}\gamma(s(t),t)s'(t) \,dt,\quad k=1,2,\ldots,n
\end{equation}
Given $[v]_{n} \in \discreteControlSet{}$ we define
Steklov averages $\chi^{k}_{s^n}$ and $(\gamma_{s^n} (s^n)')^k$
through~\eqref{eq:stek-avg-trace} with $s$ replaced by $s^n$.
The Steklov averages \(b_{ik}\) and \(c_{ik}\) are defined through
\begin{equation}\label{eq:stek-avg-cn-bn}
  b_{ik} = \frac{1}{h_i \tau} \int_{t_{k-1}}^{t_k} \int_{x_i}^{x_{i+1}} b^n(x,t)
  \,dx \,dt,\quad
  c_{ik} = \frac{1}{h_i \tau} \int_{t_{k-1}}^{t_k} \int_{x_i}^{x_{i+1}} c^n(x,t)
  \,dx \,dt
\end{equation}
Next we define a discrete state vector through discretization of the integral identity~\eqref{eq:intro-weak-w211}
\begin{definition}\label{discretestatevector}
  Given discrete control vector $[v]_n \in \discreteControlSet{}$, the vector function
  \[
    [u([v]_n)]_n=(u(0),u(1),\ldots,u(n)), ~u(k) = (u_0,\ldots,u_N)\in \Re^{N+1}, ~k=0,\ldots,n
  \]
  is called a discrete state vector if
  \begin{description}
  \item[(a)] First $m_0+1$ components of the vector $u(0)$ satisfy
    \(
      u_i(0)=\phi_i := \phi(x_i), ~i=0,1,\ldots,m_0;
      \)
  \item[(b)] 
    For arbitrary $k=1,\ldots,n$ first $m_j+1$ components of the vector $u(k)$ solve the following system of $m_j+1$ linear algebraic equations:
    \begin{gather}
      \Big[ a_{0k}+h b_{0k}-h^2c_{0k}+\frac{h^2}{\tau} \Big] u_0(k) - \Big[
      a_{0k}+hb_{0k} \Big] u_1(k)=\frac{h^2}{\tau}u_0(k-1)-h^2f_{0k}-hg^n_{k} - h p_{0k} ,
      \nonumber
      \\
      -a_{i-1,k}h_i u_{i-1}(k)+\Big[
      a_{i-1,k}h_i+a_{ik}h_{i-1}+b_{ik}h_i h_{i-1}-c_{ik}h_i^2h_{i-1}+\frac{h_i^2h_{i-1}}{\tau}
      \Big] u_i(k) \nonumber
      \\
      -\Big[ a_{ik}h_{i-1}+b_{ik}h_i h_{i-1} \Big] u_{i+1}(k) =
      -h_i^2h_{i-1}f_{ik}
      +h_i h_{i-1} p_{ik,\bar{x}}
      +\frac{h_i^2 h_{i-1}}{\tau}u_i(k-1), ~i=1,\ldots,m_j-1
      \nonumber
      \\
      -a_{m_j-1,k} u_{m_j-1}(k)+a_{m_j-1,k} u_{m_j}(k)=-h_{m_j-1} \Big[  (\gamma_{s^n} (s^n)')^k-\chi^{k}_{s^n} \Big].\label{alma}
    \end{gather}
    \item[(c)] For arbitrary $k=0,1,\ldots,n$, the remaining components of $u(k)$ are calculated as
    \(
      u_i(k)= \hat{u}(x_i;k)
      \)
      for \( m_j\leq i \leq N \)
    where $\hat{u}(x;k) \in B_2^1(0,\l)$ is a piecewise linear interpolation of $\{u_i(k): ~i=0,\ldots,m_j \}$, that is to say
    \[
      \hat{u}(x;k)=u_i(k)+u_{ix}(k) (x-x_i), ~x_i\leq x\leq
      x_{i+1}, i=0,\ldots,m_j-1,
    \]
    iteratively continued for $0 \leq x < \infty$ as
    \begin{equation}\label{Eq:W:1:14}
      \hat{u}(x;k)=\hat{u}(2^n s_k-x;k), ~2^{n-1}s_k \leq x \leq 2^n s_k, n=1,2,\ldots
    \end{equation}
  \end{description}
\end{definition}
Note that no more than
\(
  n^*=1+\log_2\Big[ \frac{\l}{\delta}\Big]
\)
reflections are required to cover $[0,\l]$.
It should be mentioned that for any $k=1,2,\ldots,n$ system~\eqref{alma} is equivalent to the following summation identity
\begin{gather}
  \sum_{i=0}^{m_j-1}h_i \Big[
  a_{ik}u_{ix}(k)\eta_{ix}
  -b_{ik}u_{ix}(k)\eta_i
  -c_{ik}u_i(k)\eta_i
  +f_{ik}\eta_i
  +p_{ik}\eta_{ix}
  +u_{i\overline{t}}(k)\eta_i
  \Big] \nonumber
  \\
  + \Big[ (\gamma_{s^n} (s^n)')^k-\chi^{k}_{s^n} \Big]\eta_{m_j}
  +g^n_k \eta_0=0,\label{Eq:W:1:19}
\end{gather}
for arbitrary numbers $\eta_i$, $i=0,1,\ldots,m_j$.

Consider a discrete optimal control problem of minimization of the cost functional
\begin{equation}\label{Eq:W:1:15}
  \fIn([v]_n)
  =\beta_{0}\sum_{i=0}^{m_n-1} h_i \Big( u_i(n) - w_i \Big)^2
  +\beta_{1}\tau\sum_{k=1}^n \Big( u_{m_k}(k)-\mu_k \Big)^2
  +\beta_2 \lnorm{s_n - \bar{s}}^2
\end{equation}
on the set $\discreteControlSet{}$ subject to the state vector defined in Definition~\ref{discretestatevector}.
Furthermore, formulated discrete optimal control problem will be called Problem $\probIn$.
Throughout, we use piecewise constant and piecewise linear interpolations of the discrete state vector:
given discrete state vector $[u([v]_n)]_n=(u(0),u(1),\ldots,u(n))$, let
\begin{gather*}
  u^\tau(x,t)=\hat{u}(x;k), \quad \text{if}~ t_{k-1}<t\leq t_k, ~0\leq x \leq \l{},
  ~k=\overline{0,n},
  \\
  \hat{u}^\tau(x,t)=\hat{u}(x;k-1)+\hat{u}_{\overline{t}}(x;k)(t-t_{k-1}), \quad
  \text{if}~ t_{k-1}<t\leq t_k, ~0\leq x \leq \l{}, ~k=\overline{1,n},
  \\
  \hat{u}^\tau(x,t)= \hat{u}(x;n), \quad \text{if}~ t\ge T, ~0\leq x \leq \l{}.
  \\
  \tilde{u}^\tau(x,t)=u_i(k), \quad \text{if}~ t_{k-1}< t \leq t_k, ~x_i \leq x
  < x_{i+1}, ~k=\overline{1,n}, ~i=\overline{0,N-1}.
\end{gather*}
Standard notations for difference quotients of the discrete state vector are employed:
\[
  u_{ix}(k)=\frac{u_{i+1}(k)-u_i(k)}{h_i},
  ~u_{i\overline{t}}=\frac{u_i(k)-u_i(k-1)}{\tau}, \quad \text{etc.}
\]
Let $\phi^n$ be a piecewise constant approximation to $\phi$:
\[
  \phi^n(x) = \phi_i,~x_i < x \leq x_{i+1},~i=0,\ldots,N-1
\]
\section{Main Results}\label{sec:main-result-formulation}
Assume that the following conditions are satisfied
\begin{gather*}
  a(x,t)\geq a_0>0, a \in B_{\infty}^{1,0}(D)
  ~\text{with}~ M:=\norm{a}_{B_{\infty}^{1,0}(D)},
  ~\text{and}~
  \int_{-1}^T \esssup_{0 \leq x \leq \l} \lnorm{\D{a}{t}} \,dt < +\infty
  \\
  w \in L_2(0,\l),
  ~\chi,\gamma \in \chigammaspace(D),
  ~\phi \in B_2^1(0,s_0),
  ~\mu \in L_2(0,T),
  ~p \in \pfunctionspace(D_{\delta}).
\end{gather*}
where $D_{\delta} = (0,\delta) \times (0,T)$.
Note that the distributional derivative $\D{p}{x}$ is understood in the sense of measures.
Extend arbitrary $\mu \in L_2(0,T)$ to $L_2(\Re)$ by zero.
The main results of this work are the following:
\begin{theorem}[Existence of an Optimal Control]\label{thm:existence-opt-control}
  Problem $\probI$ has a solution.
  That is,
  \[
    V_{*}:=\bk{v \in {V_R}: \J(v) = J_{*}=:\inf_{v \in {V_R}} \J(v)} \neq \emptyset
  \]
\end{theorem}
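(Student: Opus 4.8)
The plan is to apply the direct method of the calculus of variations. First I would take a minimizing sequence $\{v^n\} \subset V_R$, $v^n = (\controlVarsWithN)$, with $\J(v^n) \to J_* = \inf_{V_R}\J$. Since $\norm{v^n}_{\controlSpace} \le R$, each component is bounded in its own space, so after passing to a subsequence (not relabelled) we may assume $s^n \rightharpoonup s$ weakly in $\scontrolspace(0,T)$, $g^n \rightharpoonup g$ weakly in $\gcontrolspace(0,T)$, $f^n \rightharpoonup f$ weakly in $\fcontrolspace(D)$, and $b^n \rightharpoonup b$, $c^n \rightharpoonup c$ weakly in $\bcontrolspace(D)$. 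By weak lower semicontinuity of the norms, $\norm{v}_{\controlSpace} \le \liminf\norm{v^n}_{\controlSpace} \le R$ for $v := (\controlVars{})$. The compact embeddings $\scontrolspace(0,T) \hookrightarrow\hookrightarrow C^1[0,T]$, $\gcontrolspace(0,T) \hookrightarrow\hookrightarrow C[0,T]$ and — crucially, because $\epsilon > 0$ — $\bcontrolspace(D) \hookrightarrow\hookrightarrow C(\bar D)$ then upgrade these to $s^n \to s$ in $C^1[0,T]$, $g^n \to g$ in $C[0,T]$, and $b^n \to b$, $c^n \to c$ in $C(\bar D)$; in particular the constraints $0 < \delta \le s(t) \le \l$, $s(0) = s_0$, $s'(0) = 0$ pass to the limit, so $v \in V_R$.

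Next I would derive an energy estimate for the state vectors $u^n := u(\cdot,\cdot;v^n)$ uniform in $n$. Testing the identity \eqref{eq:intro-weak-w211} with $\Phi = u^n$ (and suitable truncations in $t$, since the $V_2$-norm involves an essential supremum), using $a \ge a_0 > 0$, the uniform $C(\bar D)$-bounds on $b^n, c^n$, the bounds on $f^n$ in $\fcontrolspace(D)$, on $g^n$ in $C[0,T]$, on $\gamma,\chi$ in $\chigammaspace(D)$, and on $p$ in $\pfunctionspace(D_\delta)$ — the $p$-term being paired with $\Phi_x$, so that $\D{p}{x}$ is never formed — together with trace inequalities on $x = s^n(t)$, yields $\norm{u^n}_{V_2(\Omega_n)} \le C(R)$. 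To get compactness despite the $n$-dependent domains $\Omega_n = \{0 < x < s^n(t)\}$ I would pass to the even reflection/extension $\hat u^n$ on the fixed cylinder $(0,\l)\times(0,T)$ (as in the discrete construction \eqref{Eq:W:1:14}, cf.~\cite{abdulla15}), for which $\norm{\hat u^n}_{V_2^{1,0}((0,\l)\times(0,T))} \le C(R)$ and, from the structure of the weak equation, a uniform bound on a time-difference quotient in a negative-order space holds. An Aubin--Lions-type argument then gives a subsequence with $\hat u^n \rightharpoonup u$ weakly in $B_2^{1,1}$ and strongly in $L_2((0,\l)\times(0,T))$, together with $u^n(\cdot,T) \to u(\cdot,T)$ in $L_2$ and $u^n(s^n(\cdot),\cdot) \to u(s(\cdot),\cdot)$ in $L_2(0,T)$, where $s^n \to s$ in $C^1$ is essential.

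Then I would pass to the limit in \eqref{eq:intro-weak-w211} written for $v^n$ to identify $u$ as the weak solution for $v$, i.e. $u = u(\cdot,\cdot;v)$. The terms $au^n_x\Phi_x$, $u^n_t\Phi$, $f^n\Phi$, $p\Phi_x$ and the $\gamma,\chi,g^n$ boundary terms pass by weak convergence of $u^n_x, u^n_t$ and of $f^n, g^n$ combined with $s^n \to s$ in $C^1$; the two genuinely nonlinear terms $b^n u^n_x\Phi$ and $c^n u^n\Phi$ pass because $b^n \to b$, $c^n \to c$ uniformly while $u^n_x \rightharpoonup u_x$, $u^n \rightharpoonup u$ weakly in $L_2$ (strong $\times$ weak). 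Finally, each of the three nonnegative terms of $\J$ — $\beta_0\int_0^{s^n(T)}\lnorm{u^n(x,T)-w}^2dx$, $\beta_1\int_0^T\lnorm{u^n(s^n(t),t)-\mu(t)}^2dt$, $\beta_2\lnorm{s^n(T)-\bar s}^2$ — actually converges to the corresponding term for $v$ (using strong $L_2$-convergence of the traces and $s^n(T)\to s(T)$), so $\J(v) = \lim\J(v^n) = J_*$; since $v \in V_R$, this gives $v \in V_* \ne \emptyset$.

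The main obstacle is the handling of the control-dependent (free) boundary: the state lives on the $n$-dependent domain $\Omega_n$, so extracting strong $L_2$-compactness of $u^n$ and — above all — the convergence of the boundary traces $u^n(s^n(\cdot),\cdot)$ and $u^n(\cdot,T)$ relies on the reflection extension together with the $C^1$-convergence $s^n \to s$, and it is precisely these trace convergences that make both the limit passage in the weak identity and the (semi)continuity of $\J$ work. A secondary technical point is the low regularity of $p$, whose $x$-derivative is only a measure and must be carried through the energy estimate and the limit passage without any integration by parts in $x$.
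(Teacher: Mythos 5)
Your overall strategy---direct method, weak compactness of the minimizing sequence of controls upgraded to strong/uniform convergence of the $s$, $g$, $b$, $c$ components by compact embeddings, a uniform energy estimate for the states, weak compactness of the states on a fixed cylinder, term-by-term limit passage in the weak identity (with the coefficient terms $b^nu^n_x\Phi$, $c^nu^n\Phi$ handled as strong-times-weak products and the domain mismatch $\int_{s^n(t)}^{s(t)}$ controlled by uniform convergence of $s^n$), and strong convergence of the traces to conclude $\J(v^n)\to\J(v)$---is essentially the paper's. Your two deviations are harmless in principle: the paper identifies the limit by subtracting the identity for the already-constructed state $u(\cdot,\cdot;v)$ of the limit control and invoking uniqueness of the homogeneous Neumann problem, whereas you pass to the limit in the identity for $u^n$ directly; and the paper imports its a priori estimates from the discrete scheme rather than deriving them by continuous testing.

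The genuine gap is in your a priori estimate. Testing \eqref{eq:intro-weak-w211} with $\Phi=u^n$ gives at best a uniform $V_2(\Omega_n)$ (i.e.\ $B_2^{1,0}$-type) bound: it controls $u^n$ and $u^n_x$ but not $u^n_t$. An Aubin--Lions argument from this bound plus a negative-order bound on time differences upgrades weak convergence to \emph{strong $L_2(D)$} convergence, but it does not produce the uniform $B_2^{1,1}$ bound you then assert, and it is exactly that bound your argument needs in three places: to make sense of and pass to the limit in $\int u^n_t\Phi$ (Definition~\ref{defn:intro-weak-w211} presupposes $u^n_t\in L_2$); to get strong $L_2$ convergence of the final-time traces $u^n(\cdot,T)$; and, above all, to get strong $L_2(0,T)$ convergence of the lateral traces $u^n\big(s^n(t),t\big)$. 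The last point is not cosmetic: the trace operator onto a line $x=\mathrm{const}$ is \emph{not} compact from $B_2^{1,0}$ into $L_2(0,T)$ (consider $u^n(x,t)=\varphi(x)e^{int}$, bounded in $B_2^{1,0}$ with traces that converge only weakly), so the $V_2$ bound alone cannot deliver the convergence of the $\beta_1$ term of $\J$ or of the boundary terms involving $\gamma s'-\chi$. The missing ingredient is the second energy estimate, i.e.\ a uniform bound on $\norm{u^n_t}_{L_2}$, obtained by testing with $u^n_t$ after lifting the Neumann data by an auxiliary solution $\Psi$ (using $g^n\in B_2^{1/4}(0,T)$ and $\chi,\gamma\in B_2^{1,1}(D)$); in the paper this is Theorem~\ref{thm:second-energy-est} and Corollary~\ref{cor:second-energy-est-conseq-2}, and your proof should either reproduce that estimate or cite it explicitly.
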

\begin{theorem}\label{thm:convergence-discrete}
  $\probIn$ approximate the continuous problem $\probI$ \emph{with respect to functional} in the sense that
  \[
    \lim_{n\to\infty}I_{n}^{*} = J_{*},\quad \text{where}~I_{n}^{*} = \inf_{\discreteControlSet{}}\fIn,~\text{and}~J_*=\inf_{V_{R}}\J
  \]
  Moreover, the sequence $\probIn$ approximates $\probI$ \emph{with respect to control} in the sense that if $[u]_{n,\epsilon} \in \discreteControlSet{}$ is chosen such that
\[
  I_{n}^{*} \leq \fIn([v]_{n, \epsilon}) \leq I_{n}^{*} + \epsilon_{n},
  \quad \text{where}~\epsilon_{n}\downarrow 0
\]
then
the sequence $v^n=(\controlVarsWithN{}) = \P{}_n([v]_{n,\epsilon})$ converges to an element $v_*=(\controlVarsWithStar{}) \in V_*$ weakly in $\controlSpaceWeaklyConverge{}$.
In particular, $(\controlVarsStronglyConvergeWithN{})$ converge strongly in $\controlSpaceStronglyConverge{}$.
  Moreover, $s^n$ converges to $s_*$ uniformly on $[0,T]$.
  For any $\delta>0$, define
  \[
    \Omega' = \Omega \cap \bk{x<s(t)-\delta,~0<t<T}
  \]
  Then the piecewise linear interpolations $\hat{u}^\tau$ of the corresponding discrete state vectors $\big[[v]_{n,\epsilon}\big]_n$ converge to the solution $u(x,t;v_*) \in B_2^{1,1}(\Omega_*)$ of the Neumann problem~\eqref{eq:intro-pde}--\eqref{eq:intro-stefan} weakly in $B_2^{1,1}(\Omega')$.
\end{theorem}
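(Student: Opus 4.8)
\emph{Strategy.} The plan is to prove the ``convergence with respect to functional'' assertion $\lim_n I_n^* = J_*$ via the two one-sided inequalities $\limsup_n I_n^* \le J_*$ and $\liminf_n I_n^* \ge J_*$, and then to read off the control- and state-convergence assertions directly from the compactness argument that yields the second inequality. Throughout I use the preliminary results of Section~\ref{sec:preliminary-results-disc} (uniform discrete energy estimates and the consistency of the scheme), as permitted.

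\emph{Upper bound.} Fix arbitrary $v=(\controlVars)\in V_R$ with state $u=u(x,t;v)$, and set $[v]_n=\Q_n(v)$. One first checks $[v]_n\in\discreteControlSet$: the averaging and Fourier-coefficient definitions of $\Q_n$ do not increase the relevant (semi-)norms (Parseval for the $\psi_k$-coefficients of $b,c$; the exact identity $\norm{[f]_{nN}}_{\fdiscretecontrolspace}=\norm{f}_{\fcontrolspace(D)}$ for the piecewise-constant average; comparability for $s$ and $g$), the constraint $s_k\ge\delta$ is inherited from $s(t)\ge\delta$, and $s_0$, $s'(0)=0$ survive because of the form of \eqref{Eq:W:1:11}; thus $[v]_n\in\discreteControlSet$ at worst after replacing $R$ by $R(1+o(1))$, which does not change the infimum. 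The preliminary results then give uniform energy estimates for the discrete state vector $[u([v]_n)]_n$ and, passing to the limit in the summation identity \eqref{Eq:W:1:19} tested against grid restrictions of smooth functions, show that $\hat u^\tau$, $u^\tau$, $\tilde u^\tau$ converge to the unique weak solution $u(\cdot;v)$ of Definition~\ref{defn:intro-weak-w211} -- weakly in $B_2^{1,1}$ on interior subdomains, strongly in $L_2$, with $g^n,f^n,b^n,c^n,\chi^k_{s^n},(\gamma_{s^n}(s^n)')^k$ converging to the corresponding continuous objects. Each term of $\fIn$ in \eqref{Eq:W:1:15} then converges to the corresponding term of $\J$ in \eqref{eq:opt-functional} (the $\beta_0$ term from $u^\tau(\cdot,T)\to u(\cdot,T)$ in $L_2$ and $w_i\to w$; the $\beta_1$ term from convergence of the lateral traces $u_{m_k}(k)$ together with $\mu_k\to\mu$; the $\beta_2$ term from $s_n\to s(T)$), so $\limsup_n I_n^*\le\limsup_n\fIn([v]_n)=\J(v)$, and $\inf_{v}$ gives $\limsup_n I_n^*\le J_*$.

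\emph{Lower bound and compactness.} For the near-minimizers put $v^n=(\controlVarsWithN)=\P_n([v]_{n,\epsilon})$. Since the operators $\P_n$ are norm-bounded (asymptotically norm-nonincreasing by the design of the discrete norms), $\norm{v^n}_{\controlSpace}\le R(1+o(1))$, so $v^n$ is bounded in the reflexive space $\controlSpace=\controlSpaceFull$; pass to a subsequence with $v^n\rightharpoonup v_*=(\controlVarsWithStar)$ weakly in $\controlSpace$. By the compact embeddings $\scontrolspace(0,T)\hookrightarrow C^1[0,T]$, $\gcontrolspace(0,T)\hookrightarrow\fcontrolspace(0,T)$, $\bcontrolspace(D)\hookrightarrow\fcontrolspace(D)$, a further subsequence has $s^n\to s_*$ in $C^1[0,T]$ (hence uniformly) and $(\controlVarsStronglyConvergeWithN)\to(s_*,g_*,b_*,c_*)$ strongly in $\controlSpaceStronglyConverge$. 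The uniform energy estimates for $[v]_{n,\epsilon}$ let us extract, for each $\delta>0$, $\hat u^\tau\rightharpoonup u$ weakly in $B_2^{1,1}(\Omega')$ and strongly in $L_2(\Omega')$; passing to the limit in \eqref{Eq:W:1:19} identifies $u=u(x,t;v_*)$ as the weak solution of \eqref{eq:intro-pde}--\eqref{eq:intro-stefan}, the convective and reactive terms being handled by pairing the \emph{strong} $L_2$-convergence of $b^n,c^n$ against the weak $L_2$-convergence of the difference quotients $u^\tau_x$ and the strong $L_2$-convergence of $u^\tau$, respectively, and the boundary terms using $s^n\to s_*$ in $C^1$ together with the trace estimates for $\chigammaspace(D)$ functions on the lateral curve. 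Closedness of $V_R$ under weak $\controlSpace$-convergence ($\norm{v_*}_{\controlSpace}\le R$ by weak lower semicontinuity and the $o(1)$; $s_*\ge\delta$, $s_*(0)=s_0$, $s_*'(0)=0$ from the $C^1$-convergence) yields $v_*\in V_R$.

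\emph{Conclusion and main obstacle.} Weak lower semicontinuity of the $L_2$-norm applied to $u^\tau(\cdot,T)\rightharpoonup u(\cdot,T)$ and to the piecewise-constant lateral traces, together with $s_n\to s_*(T)$, gives $\J(v_*)\le\liminf_n\fIn([v]_{n,\epsilon})=\liminf_n I_n^*$. Combined with the upper bound, $J_*\le\J(v_*)\le\liminf_n I_n^*\le\limsup_n I_n^*\le J_*$, so all are equal: $I_n^*\to J_*$, $\J(v_*)=J_*$ (i.e.\ $v_*\in V_*$), and the weak $\controlSpace$-convergence $v^n\rightharpoonup v_*$, the strong convergence of $(\controlVarsStronglyConvergeWithN)$ in $\controlSpaceStronglyConverge$, the uniform convergence $s^n\to s_*$, and the weak $B_2^{1,1}(\Omega')$-convergence $\hat u^\tau\rightharpoonup u(\cdot;v_*)\in B_2^{1,1}(\Omega_*)$ hold along the extracted subsequence, hence in the form asserted by the standard subsequence argument. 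The real obstacle is the consistency step used in both halves: passing to the limit in \eqref{Eq:W:1:19} on the $n$-dependent spatial grid tied to $[s]_n$, with the even-reflection extension \eqref{Eq:W:1:14} placing all interpolants on a common domain and data carried on the moving curve $x=s^n(t)$. Controlling the lateral trace terms and the convective/reactive products near the free boundary -- the latter depending precisely on the strong $L_2$-convergence of $b^n,c^n$, available only through the compact embedding $\bcontrolspace(D)\hookrightarrow\fcontrolspace(D)$ -- is where the bulk of the work lies; verifying $\Q_n(v)\in\discreteControlSet$ (that discretization does not inflate the control norm past $R$) is a secondary technical point.
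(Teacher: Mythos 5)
Your proposal is correct and follows essentially the same route as the paper: the two-sided estimate $\limsup_n I_n^*\le J_*\le\liminf_n I_n^*$ obtained by feeding $\Q_n(v)$ into $\fIn$ for the upper bound and $\P_n([v]_{n,\epsilon})$ into $\J$ for the lower bound is exactly the content of Lemmas~\ref{lem:convergence-2} and~\ref{lem:convergence-3}, combined with the mapping properties of Lemma~\ref{thm:pn-eq-maps-props:1} and the compactness/identification machinery already built for Theorem~\ref{thm:existence-opt-control}. The one step you dismiss too quickly is the assertion that replacing $R$ by $R(1\pm o(1))$ ``does not change the infimum'': since $\Q_n$ only maps $V_{R-\epsilon}$ into $\discreteControlSet$ and $\P_n$ only maps $\discreteControlSet$ into $V_{R+\epsilon}$, the two-sided bound actually reads $J_*(\epsilon)\le\liminf_n I_n^*\le\limsup_n I_n^*\le J_*(-\epsilon)$, and the continuity $\lim_{\epsilon\to0}J_*(\pm\epsilon)=J_*$ is a separate statement (Lemma~\ref{lem:convergence-1} in the paper) that must be proved from weak compactness of $V_R$ and weak continuity of $\J$ --- it is not automatic from the definitions. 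With that lemma supplied, your argument closes.
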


\section{Preliminary Results}\label{sec:preliminary-results-disc}
\begin{lemma}\label{thm:pn-eq-maps-props:1}
  For arbitrary sufficiently small $\epsilon>0$, there exists $n_{\epsilon}$ such that
  \begin{gather}
    \Q{}_{n}(v) \in \discreteControlSet{},~\text{for all}~v \in V_{R-\epsilon},~n > n_{\epsilon}\label{eq:thm:pn-eq-maps-props:1-1}
    \\
    \P{}_{n}\big([v]_{n}\big) \in V_{R+\epsilon},~\text{for all}~[v]_n \in \discreteControlSet{},~n > n_{\epsilon}\label{eq:thm:pn-eq-maps-props:1-2}
  \end{gather}
\end{lemma}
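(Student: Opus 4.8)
The plan is to establish the two inclusions separately, each by showing that the relevant norm bounds and structural constraints are preserved up to an arbitrarily small loss as $n\to\infty$. For \eqref{eq:thm:pn-eq-maps-props:1-1}, start with $v=(\controlVars{})\in V_{R-\epsilon}$ and examine $\Q{}_n(v)=(\discreteControlVars{})$. The lower bound $s_k=s(t_k)\geq\delta$ is immediate since it is a pointwise restriction of the continuous constraint (with $s_k\equiv s_0\geq\delta$ for $k\leq 1$). The work is in the norm estimate $\norm{\Q{}_n(v)}_{\bar H}\leq R$: for each component one must show that the discrete norm of the sampled/projected object converges to (or is bounded by) the continuous norm. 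For $[g]_n$, the sums $\sum\tau g_k^2$ and $\sum\tau g_{k,\bar t}^2$ are Riemann-type sums for $\int g^2$ and approximations of $\int (g')^2$; using $g\in\gcontrolspace(0,T)=B_2^1$ and standard Steklov-averaging inequalities these are bounded by $\norm{g}_{\gcontrolspace}^2+o(1)$. For $[s]_n$, the extra term $\sum\tau s_{k,\bar t t}^2$ is controlled by $\norm{s''}_{L_2}^2$ via the same device, using $s\in\scontrolspace=B_2^2$ and $s'(0)=0$. For $[f]_{nN}$, $\norm{[f]_{nN}}_{\fdiscretecontrolspace}^2=\sum\tau h_i f_{ik}^2\leq\iint f^2$ by Jensen/Cauchy–Schwarz on the defining integral averages. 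For $[b]_n,[c]_n$, since $b_k=\langle b,\psi_k\rangle_{\bcontrolspace}$ are Fourier coefficients in an orthonormal system, Bessel's inequality gives $\sum_{k=0}^n b_k^2\leq\norm{b}_{\bcontrolspace}^2$. Combining, each discrete norm is $\leq (R-\epsilon)+o(1)\leq R$ for $n$ large, uniformly in $v\in V_{R-\epsilon}$; choose $n_\epsilon$ accordingly.

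For \eqref{eq:thm:pn-eq-maps-props:1-2}, take $[v]_n\in\discreteControlSet{}$ and set $v^n=\P{}_n([v]_n)=(\controlVarsWithN{})$. The constraint $\delta\leq s^n(t)$: from the definition \eqref{Eq:W:1:11}, $s^n$ is a piecewise-quadratic interpolant of $\{s_k\}$ with $s_k\geq\delta$; one checks via the explicit formula (and the discrete regularity encoded in the bounded $\sdiscretecontrolspace$-norm, which controls $s_{k,\bar t t}$) that the interpolant cannot dip below $\delta$ asymptotically — more precisely $s^n(t)\geq\min_k s_k - C\Delta_{\text{osc}}$ where the oscillation term is $o(1)$, so $s^n\geq\delta$ can be arranged, or one relaxes to $\delta/2$ if the paper's $V_R$ admits it; I would follow whatever convention Lemma's statement intends (the cleanest is: the constraint in $V_R$ is $s\geq\delta$ and here we only need $s^n\geq\delta - o(1)$, absorbed into "$V_{R+\epsilon}$" by noting $V_{R+\epsilon}$ uses the same $\delta$; if strictness is an issue one shrinks $\delta$ slightly in the discrete set, a standard fix). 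The conditions $s^n(0)=s_0$ and $(s^n)'(0)=0$ follow directly from \eqref{Eq:W:1:11} at $k=1$ together with $s_k\equiv s_0$ for $k\leq 1$, which forces $s_{0,\bar t}=0$. Finally the norm bound $\norm{v^n}_{\controlSpace}\leq R+\epsilon$: this is the converse direction of the same comparisons — $\norm{g^n}_{\gcontrolspace(0,T)}^2\leq\norm{[g]_n}_{\gdiscretecontrolspace}^2+o(1)$ for the piecewise-linear $g^n$, $\norm{s^n}_{\scontrolspace(0,T)}^2\leq\norm{[s]_n}_{\sdiscretecontrolspace}^2+o(1)$ for the piecewise-quadratic $s^n$ (here one must verify that the $B_2^2$-seminorm of a piecewise-quadratic, i.e. the $L_2$-norm of its second derivative, equals $\sum\tau s_{k-1,\bar t t}^2$ exactly, plus lower-order terms matching), $\norm{f^n}_{\fcontrolspace(D)}^2=\norm{[f]_{nN}}_{\fdiscretecontrolspace}^2$ exactly since $f^n$ is the corresponding piecewise constant, and for $b^n=\sum_{k=0}^n b_k\psi_k$, $\norm{b^n}_{\bcontrolspace}^2=\sum_{k=0}^n b_k^2\leq R^2$ exactly by orthonormality (likewise $c^n$). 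So all five components are $\leq R+o(1)\leq R+\epsilon$ for $n$ large, uniformly over $\discreteControlSet{}$.

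The main obstacle is the $\scontrolspace=B_2^2$ estimates for the $s$-component in both directions: one needs the equivalence between the continuous $B_2^2(0,T)$-norm and the discrete $\sdiscretecontrolspace$-norm under the interpolation \eqref{Eq:W:1:11} to be two-sided and with only $o(1)$ error, and this requires care with boundary terms at $t=0,T$ and with the precise Steklov-averaging lemmas (the $s'(0)=0$ and $s_0$-pinning conditions are exactly what make the boundary contributions vanish). The $f$-component is the easiest (exact identity on piecewise-constants); the $b,c$-components reduce to Bessel/Parseval and are also essentially exact; the $g$-component is a routine $B_2^1$-versus-$b_2^1$ comparison. A secondary technical point is the \emph{uniformity} of $n_\epsilon$ over the control set: this is fine because in every estimate the $o(1)$ terms are dominated by quantities like $\Delta\cdot\norm{v}_{\controlSpace}^2\leq\Delta R^2$ or by moduli of continuity that are uniform on norm-bounded sets, so a single $n_\epsilon$ works. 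I would organize the proof as a short sequence of sublemmas, one per component, each asserting the two-sided norm comparison with $o(1)$ error uniform on $R$-balls, and then assemble \eqref{eq:thm:pn-eq-maps-props:1-1}–\eqref{eq:thm:pn-eq-maps-props:1-2} in one line each.
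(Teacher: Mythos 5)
Your proposal is correct and takes essentially the same approach as the paper: the genuinely new components $f$, $b$, $c$ are treated identically (Cauchy--Schwarz for the discretized $f$ with an exact norm identity for the piecewise-constant reconstruction, and Bessel's inequality together with orthonormality of $\{\psi_k\}$ for $b$ and $c$), while the $s$ and $g$ comparisons you sketch in detail are simply delegated by the paper to Lemma 2.2 of the cited prior work \cite{abdulla15}. The one issue you flag that the paper does not discuss here --- whether the piecewise-quadratic interpolant $s^n$ can dip below $\delta$ --- is likewise absorbed into that citation rather than argued anew.
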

\begin{proof}
  The first two components of either $\Q{}_n(v)$ for $v \in V_{R-\epsilon}$ or
  $\P{}_n\big([v]_n\big)$ for $[v]_n \in \discreteControlSet{}$ are estimated as
  in~\cite{abdulla15}*{Lem.\ 2.2}; all that remains is the estimation of the
  components corresponding to $f$, $b$, and $c$ in both.
  Since the components corresponding to $b$ and $c$ are in the same control set,
  we will give full details of for the $c$ component only, as those
  corresponding to $b$ are identical.

  Fix $v \in V_{R-\epsilon}$ and let $\big(\discreteControlVars\big) = \Q{}_n(v)$.
     By Cauchy-Bunyakovski-Schwarz (CBS) inequality,
     \begin{equation}
  \norm{[f]_{nN}}_{\fdiscretecontrolspace}^2
    \leq \sum_{k=1}^n \sum_{i=0}^{N-1}  \int_{x_i}^{x_{i+1}}\int_{t_{k-1}}^{t_k} \lnorm{f(x,t)}^2\,dt\,dx
      =\int_0^T \int_0^{\l} \lnorm{f(x,t)}^2 \,dx \,dt 
      = \norm{f}_{\fcontrolspace(D)}^2 
      \leq (R-\epsilon)^2\label{eq:Pn-Qn-fterm-est1}
\end{equation}
By Bessel's inequality,
\begin{gather}
\norm{[c]_n}_{\cdiscretecontrolspace}^{2}
= \sum_{k=0}^{n} \lnorm{c_{k}}^{2}
\leq \sum_{k=0}^{\infty} \lnorm{c_{k}}^{2}
\leq \norm{c}_{\ccontrolspace(D)}^{2} \leq (R-\epsilon)^2\label{eq:Pn-Qn-cterm-est1}
\end{gather}
By~\eqref{eq:Pn-Qn-fterm-est1},~\eqref{eq:Pn-Qn-cterm-est1}, and the proof of~\cite{abdulla15}*{Lem.\ 2.1}, it follows that
\[
\norm{\Q{}_n(v)}_{\discreteControlSet}^2 \leq R^2
\]
for \(\tau{}\) sufficiently small, which implies~\eqref{eq:thm:pn-eq-maps-props:1-1}.
Consider $[v]_n \in \discreteControlSet{}$ and let $\big(\controlVars\big) = \P{}_n([v]_n)$.
  Calculate
\begin{align}
  \norm{f}_{\fcontrolspace(D)}^2
    & = \int_0^T \int_0^{\l} \lnorm{f(x,t)}^2 \,dx \,dt
    = \sum_{k=1}^{n} \tau \sum_{i=0}^{N-1} h_i \lnorm{f_{ik}}^2 
    = \norm{[f]_n}_{\fdiscretecontrolspace(0,\l)}^2 
    \leq R^2\label{eq:Pn-Qn-fterm-est2}
\end{align}
By definition,
\begin{gather}
\norm{c^n}_{\ccontrolspace(D)}^{2}
= \left\langle c^{n}, c^{n} \right\rangle_{\ccontrolspace}
= \sum_{k=0}^{n} \sum_{j=0}^{n} c_{k} c_{j} \left\langle \psi_{k}, \psi_{j}\right\rangle_{\ccontrolspace}
= \norm{[c]_{n}}_{\cdiscretecontrolspace(D)}^{2}
\leq R^2\label{eq:Pn-Qn-cterm-est2}
\end{gather}
  By~\eqref{eq:Pn-Qn-fterm-est2},~\eqref{eq:Pn-Qn-cterm-est2}, and the proof of~\cite{abdulla15}*{Lem.\ 2.1}, it follows that
\[
\norm{\P{}_n([v]_{n})}_{\controlSpace{}}^2 \leq (R+\epsilon)^2
\]
for \(\tau{}\) sufficiently small, which implies~\eqref{eq:thm:pn-eq-maps-props:1-2}.
Lemma is proved.
\end{proof}
As in~\cite{abdulla13}, it follows from Theorem~\ref{thm:pn-eq-maps-props:1} that
\begin{corollary}\label{cor:pn-eq-maps-props:2}
  Let either $[v]_{n}\in \discreteControlSet{}$ or $[v]_{n}= \Q{}_{n}(v)$ for $v \in V_{R}$.
  Then for large $n$,
  \begin{equation}
    \lnorm{s_{k} - s_{k-1}} \leq C' \tau,~k = 1, 2, \ldots, n\label{eq:control-lipschitz-condition}
  \end{equation}
  where $C'$ is independent of $n$.
\end{corollary}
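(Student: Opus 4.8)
The plan is to read off the Lipschitz-in-time bound~\eqref{eq:control-lipschitz-condition} from the control of the discrete second difference $s_{k,\bar{t}t}$ that is built into the norm $\norm{\cdot}_{\sdiscretecontrolspace}$, treating the two cases separately. Since $s_k-s_{k-1}=\tau\, s_{k,\bar{t}}$, it suffices to show $\max_{1\le k\le n}\lnorm{s_{k,\bar{t}}}\le C'$ with $C'$ independent of $n$.

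First I would take $[v]_n\in\discreteControlSet$. By the convention $s_k\equiv s_0$ for $k\le 1$ one has $s_{1,\bar{t}}=0$ — the discrete counterpart of $s'(0)=0$ — so telescoping, together with $s_{l+1,\bar{t}}-s_{l,\bar{t}}=\tau\, s_{l,\bar{t}t}$, gives
\begin{equation*}
  s_{k,\bar{t}}=s_{1,\bar{t}}+\sum_{l=1}^{k-1}\tau\, s_{l,\bar{t}t}=\tau\sum_{l=1}^{k-1}s_{l,\bar{t}t},\qquad k=1,\ldots,n .
\end{equation*}
Then the CBS inequality and $\tau(k-1)\le\tau n=T$ yield
\begin{equation*}
  \lnorm{s_{k,\bar{t}}}^2\le\tau(k-1)\sum_{l=1}^{k-1}\tau\, s_{l,\bar{t}t}^2\le T\sum_{l=1}^{n-1}\tau\, s_{l,\bar{t}t}^2\le T\,\norm{[s]_n}_{\sdiscretecontrolspace}^2\le T R^2,
\end{equation*}
hence $\lnorm{s_k-s_{k-1}}=\tau\lnorm{s_{k,\bar{t}}}\le\sqrt{T}\,R\,\tau$.

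For the second case, $[v]_n=\Q{}_n(v)$ with $v\in V_R$, I would argue from the continuous bound: here $s_k=s(t_k)$ for $k=2,\ldots,n$ and $s_1=s_0=s(0)$, while $\norm{s}_{\scontrolspace(0,T)}\le R$. By the one-dimensional embedding $\scontrolspace(0,T)\hookrightarrow C^1([0,T])$ (Morrey's inequality, \cites{ladyzhenskaya68,besov79}) there is $c_0=c_0(T)$, independent of $n$, with $\norm{s'}_{C([0,T])}\le c_0 R$; therefore, for $k\ge 2$,
\begin{equation*}
  \lnorm{s_k-s_{k-1}}=\left|\,\int_{t_{k-1}}^{t_k}s'(t)\,dt\,\right|\le(t_k-t_{k-1}+\tau\,\Ind_{\{k=2\}})\,\norm{s'}_{C([0,T])}\le 2 c_0 R\,\tau ,
\end{equation*}
where the term $\tau\,\Ind_{\{k=2\}}$ accounts for $s_1$ being set to $s(0)$ rather than $s(t_1)$, and $\lnorm{s_1-s_0}=0$. (Alternatively, this case follows from Lemma~\ref{thm:pn-eq-maps-props:1} and the proof of~\cite{abdulla15}*{Lem.\ 2.1}: for large $n$ one has $\norm{\Q{}_n(v)}_{\bar{H}}\le 2R$, so $\Q{}_n(v)$ lies in the analogue of $\discreteControlSet$ with $R$ replaced by $2R$, and the estimate of the first case applies.) Taking $C'=2(\sqrt{T}+c_0)R$ then gives~\eqref{eq:control-lipschitz-condition}.

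I do not anticipate a genuine obstacle here. The only points needing care are the use of the endpoint convention $s_1=s_0$ (equivalently $s_{1,\bar{t}}=0$) to discard the ``constant of integration'' in the telescoping step, and, in the $\Q{}_n$-case, the need to work from $\norm{s}_{\scontrolspace(0,T)}\le R$ through the Sobolev embedding rather than citing Lemma~\ref{thm:pn-eq-maps-props:1} directly, since $v$ is assumed only in $V_R$ and not in $V_{R-\epsilon}$.
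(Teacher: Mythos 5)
Your proof is correct and is essentially the argument the paper intends: the paper only cites \cite{abdulla13} and Lemma~\ref{thm:pn-eq-maps-props:1} here, and the intended content is exactly your telescoping-plus-CBS bound on $s_{k,\bar t}$ via the $\sum_k\tau s_{k,\bar t t}^2$ term of the $\sdiscretecontrolspace$ norm (using $s_1=s_0$), with the $\Q_n$ case reduced either to the discrete case through the lemma or handled directly through the embedding $\scontrolspace(0,T)\hookrightarrow C^1[0,T]$. Your explicit handling of the $k=2$ node (where $s_1=s(0)$ rather than $s(t_1)$) is a correct and welcome detail.
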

Note that for the step size $h_i$ we have one of the three possibilities: $h_i=h$, or $h_i=\overline{h}$, or $h_i \leq |s_{k}-s_{k-1}|$ for some $k$.
Hence, from~\eqref{eq:htau} and~\eqref{eq:control-lipschitz-condition}, it follows that
\begin{equation}\label{eq:htau-implication}
  \Delta = O(\sqrt{\tau}), \quad \text{as}~ \tau \rightarrow 0.
\end{equation}
Using Lemma~\ref{thm:pn-eq-maps-props:1}, we derive
\begin{corollary}\label{cor:cn-bn-unif-bdd}
  For a given discrete control vectors \([b]_n \in \bdiscretecontrolspace{}\), the
  coefficients \(\bk{b_{ik}}\) defined
  by~\eqref{eq:stek-avg-cn-bn} satisfy the estimate
  \begin{equation}
    \max_{ik} \lnorm{b_{ik}} \leq C\norm{[b]_n}_{\bdiscretecontrolspace}
  \end{equation}
  for \(C\) independent of \(n\) and \([b]_n\).
  In particular, \(\bk{b_{ik}}\) are uniformly bounded whenever
  \(\norm{[b]_n}_{\bdiscretecontrolspace{}}\) are bounded.
  Similarly, the coefficients \(\bk{c_{ik}}\) are uniformly bounded.
\end{corollary}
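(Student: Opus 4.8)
\emph{Proof proposal.} The plan is to read off the bound from the Sobolev embedding of the coefficient control space $\ccontrolspace(D) = B_2^{1+\epsilon}(D)$ into the space of continuous functions on $\bar D$. Since $D$ is a bounded two-dimensional Lipschitz domain and the smoothness exponent $1+\epsilon$ strictly exceeds the critical value $1 = \tfrac{1}{2}\dim D$, the embedding $B_2^{1+\epsilon}(D)\hookrightarrow C(\bar D)$ holds with some constant $C_0$ depending only on $D$ and $\epsilon$, hence independent of $n$ and of $[b]_n$.

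First, exactly as in~\eqref{eq:Pn-Qn-cterm-est2}, the definition $b^n(x,t)=\sum_{k=0}^n b_k\psi_k(x,t)$ together with orthonormality of $\{\psi_k\}$ in $\ccontrolspace(D)$ gives
\[
  \norm{b^n}_{\ccontrolspace(D)}^2
  = \sum_{k=0}^n\sum_{j=0}^n b_k b_j\langle\psi_k,\psi_j\rangle_{\ccontrolspace}
  = \sum_{k=0}^n\lnorm{b_k}^2
  = \norm{[b]_n}_{\bdiscretecontrolspace}^2,
\]
so that $\norm{b^n}_{C(\bar D)}\leq C_0\norm{b^n}_{\ccontrolspace(D)} = C_0\norm{[b]_n}_{\bdiscretecontrolspace}$. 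Second, each Steklov average $b_{ik}$ defined by~\eqref{eq:stek-avg-cn-bn} is an integral average of $b^n$ over the cell $(x_i,x_{i+1})\times(t_{k-1},t_k)$, whence
\[
  \lnorm{b_{ik}}
  \leq \frac{1}{h_i\tau}\int_{t_{k-1}}^{t_k}\int_{x_i}^{x_{i+1}}\lnorm{b^n(x,t)}\,dx\,dt
  \leq \norm{b^n}_{C(\bar D)}.
\]
Taking the maximum over $i,k$ and combining the two displays yields $\max_{ik}\lnorm{b_{ik}}\leq C_0\norm{[b]_n}_{\bdiscretecontrolspace}$, which is the asserted estimate with $C=C_0$. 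The same chain of inequalities applies verbatim to $c_{ik}$. Finally, uniform boundedness follows because on $\discreteControlSet$ — and for $\Q{}_n(v)$ with $v\in V_R$ by Lemma~\ref{thm:pn-eq-maps-props:1} — one has $\norm{[b]_n}_{\bdiscretecontrolspace}\leq R$.

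There is no substantial obstacle here: the only point requiring attention is that the embedding available is into $C(\bar D)$ rather than merely into some $L_p(D)$, which is precisely the reason the exponent $1+\epsilon$ (strictly above the critical exponent) was built into the control spaces for the coefficients $b$ and $c$.
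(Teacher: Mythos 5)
Your proposal is correct and follows essentially the same route as the paper: bound the Steklov averages by the sup norm of $b^n$, apply the embedding of $B_2^{1+\epsilon}(D)$ into $L_\infty(D)$ (equivalently $C(\bar D)$), and use the Parseval-type identity $\norm{b^n}_{B_2^{1+\epsilon}(D)} = \norm{[b]_n}_{b_2}$ already established in the proof of Lemma 5.1. Your added justification that $1+\epsilon$ exceeds the critical exponent for the two-dimensional domain is a correct and welcome elaboration of why the embedding holds.
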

\begin{proof}
By embedding of $\ccontrolspace(D)$ in $L_{\infty}(D)$~\cites{besov79,nikolskii75,solonnikov64,solonnikov65},
\begin{gather*}
  \max_{ik} \lnorm{c_{ik}}
  = \max_{ik} \frac{1}{h_i \tau} \lnorm{\int_{t_{k-1}}^{t_k} \int_{x_i}^{x_{i+1}} c^{n}(x,t) \,dx \,dt}
  \leq \max_{ik} \sup_{t_{k-1} \leq t \leq t_k,~x_i \leq x \leq x_{i+1}} \lnorm{c^{n}(x,t)}
  \\
  = \norm{c^{n}}_{L_{\infty}(D)}
  \leq C \norm{c^{n}}_{\ccontrolspace(D)}
  \leq C \norm{[c]_{n}}_{\cdiscretecontrolspace(D)}\qedhere%
\end{gather*}
\end{proof}
\begin{lemma}\label{thm:disc-state-well-defined}
  For given $[v]_{n} \in \discreteControlSet{}$, the discrete state vector $\big[u\big( [v]_{n}\big) \big]_{n}$ exists and is unique for all sufficiently small $\tau>0$.
\end{lemma}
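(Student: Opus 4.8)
The plan is to reduce the statement to the unique solvability, at each time level, of the square linear system~\eqref{alma}, and then to establish that by an energy argument. Parts~(a) and~(c) of Definition~\ref{discretestatevector} are explicit: at $k=0$ the first $m_0+1$ entries of $u(0)$ are prescribed to be $\phi_i$, and at every level $k$, once the first $m_j+1$ entries of $u(k)$ are known, the remaining entries are produced by the piecewise-linear interpolation $\hat u(\cdot;k)$ followed by at most $n^*=1+\log_2[\l/\delta]$ reflections~\eqref{Eq:W:1:14} --- a finite, well-defined procedure. So only part~(b) carries content. I would argue by induction on $k$: the vector $u(0)$ exists and is unique by~(a) and~(c); assuming $u(k-1)$ has been determined, the first $m_j+1$ components of $u(k)$ are, by definition, the solution of the square system~\eqref{alma}, and it suffices to show that system is nonsingular, uniformly for $k=1,\dots,n$ once $\tau$ is small.

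For nonsingularity of the square system it is enough that the homogeneous system has only the trivial solution. By the equivalence of~\eqref{alma} with the summation identity~\eqref{Eq:W:1:19}, a vector $(u_0,\dots,u_{m_j})$ solves the homogeneous system precisely when~\eqref{Eq:W:1:19} holds for all test numbers $\eta_i$ with $f_{ik}=p_{ik}=g^n_k=0$, the $\gamma$--$\chi$ term set to zero, and $u_i(k-1)=0$ (so that $u_{i\bar t}(k)=u_i(k)/\tau$). Testing with $\eta_i=u_i(k)$ yields
\[
  \sum_{i=0}^{m_j-1} h_i\Big[\, a_{ik}\,u_{ix}(k)^2 - b_{ik}\,u_{ix}(k)\,u_i(k) - c_{ik}\,u_i(k)^2 + \frac{u_i(k)^2}{\tau}\,\Big] = 0 .
\]
Since $a_{ik}\ge a_0>0$ (Steklov averaging preserves the hypothesis $a\ge a_0$) and $|b_{ik}|,|c_{ik}|\le C$ with $C$ independent of $n$ and $[v]_n$ by Corollary~\ref{cor:cn-bn-unif-bdd}, estimating $|b_{ik}u_{ix}u_i|\le \tfrac{a_0}{2}u_{ix}^2+\tfrac{C^2}{2a_0}u_i^2$ and absorbing gives
\[
  \frac{a_0}{2}\sum_{i=0}^{m_j-1} h_i\,u_{ix}(k)^2 + \Big(\frac{1}{\tau}-C_1\Big)\sum_{i=0}^{m_j-1} h_i\,u_i(k)^2 \le 0 ,
\]
with $C_1=C+\tfrac{C^2}{2a_0}$ independent of $k$ and $n$. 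For $\tau<1/C_1$ both terms are nonnegative, so $u_i(k)=0$ for $i=0,\dots,m_j-1$ and $u_{ix}(k)=0$ for the same range; then $u_{m_j}(k)=u_{m_j-1}(k)+h_{m_j-1}u_{m_j-1,x}(k)=0$. Hence the homogeneous system has only the zero solution, \eqref{alma} is uniquely solvable, and the induction closes.

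The one genuine point of care --- hence the main obstacle --- is that the threshold $\tau<1/C_1$ must be uniform over the time levels $k=1,\dots,n$ and over the admissible controls $[v]_n\in\discreteControlSet{}$; this is exactly what Corollary~\ref{cor:cn-bn-unif-bdd} supplies, together with the structural fact that the reactive and convective contributions are dominated by the $1/\tau$ coming from the discrete time derivative. Alternatively one could prove nonsingularity directly, checking that the tridiagonal matrix of~\eqref{alma} is irreducibly diagonally dominant for small $\tau$, using in addition $\Delta=O(\sqrt\tau)$ from~\eqref{eq:htau-implication} to control the first and last rows; the energy argument is cleaner and avoids imposing sign conditions on the off-diagonal entries. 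No estimate beyond those already assembled in Section~\ref{sec:preliminary-results-disc} is required.
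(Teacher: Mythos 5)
Your proof is correct and follows essentially the same route the paper takes: the paper simply defers to the argument of \cite{abdulla15}*{Lem.\ 2.1}, which is exactly this induction on time levels combined with testing the homogeneous summation identity~\eqref{Eq:W:1:19} by $\eta_i=u_i(k)$ and absorbing the lower-order terms into $1/\tau$ for small $\tau$. You also correctly identify the only genuinely new ingredient needed here relative to that reference, namely the uniform bounds on $b_{ik}$ and $c_{ik}$ from Corollary~\ref{cor:cn-bn-unif-bdd}.
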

Lemma~\ref{thm:disc-state-well-defined} is established in nearly the same way as~\cite{abdulla15}*{Lem.\ 2.1}.
\section{Proof of Main Results}\label{sec:main-results}
\subsection{Energy Estimates and their Consequences}\label{sec:energy-est}
\begin{theorem}\label{thm:first-energy-est}
  For $\tau$ sufficiently small, and for any discrete control $[v]_{n}\in
    \discreteControlSet{}$, the corresponding discrete state vector satisfies the estimate
  \begin{gather}
    \max_{0\leq k \leq n} \sum_{i=0}^{N-1}
    h_i u_i^2(k)+\sum_{k=1}^{n}\tau\sum_{i=0}^{N-1}h_i u_{ix}^2(k)
    \leq C \Big (
    \norm{ \phi^n }_{L_2(0,s_0)}^2
    + \norm{ g^n }_{L_2(0,T)}^2
    + \norm{ f^n }_{L_2(D)}^2\nonumber
    \\
    + \norm{ \gamma(s^n(t),t)(s^n)'(t) }_{L_2(0,T)}^2
    + \norm{ \chi(s^n(t),t) }_{L_2(0,T)}^2
    +\sum_{k=1}^{n-1}\Ind_{+}(s_{k+1}-s_{k}) \sum_{i=m_j}^{m_{j_{k+1}}-1} h_i u_i^2(k) \Big ),\label{eq:thm:first-energy-est-2}
  \end{gather}
\end{theorem}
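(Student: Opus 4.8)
The plan is to test the summation identity~\eqref{Eq:W:1:19} with $\eta_i=u_i(k)$, multiply by $\tau$, sum over time levels, and perform summation by parts in $t$, keeping careful track of the fact that the active spatial index set $\{0,1,\dots,m_{j_k}-1\}$ changes with $k$ through the discrete free boundary. Fix $[v]_n\in\discreteControlSet{}$; by Lemma~\ref{thm:disc-state-well-defined} the discrete state vector exists and is unique for $\tau$ small. Abbreviate $m_k:=m_{j_k}$, $A_k:=\sum_{i=0}^{m_k-1}h_i u_i^2(k)$, $G_k:=\sum_{i=0}^{m_k-1}h_i u_{ix}^2(k)$, and fix $K\in\{1,\dots,n\}$. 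Using $2u_{i\bar t}(k)u_i(k)=\tau^{-1}\big(u_i^2(k)-u_i^2(k-1)\big)+\tau u_{i\bar t}^2(k)$, the identity~\eqref{Eq:W:1:19} with $\eta_i=u_i(k)$, multiplied by $\tau$ and summed over $k=1,\dots,K$, produces on its left-hand side the nonnegative quantities $a_0\sum_{k=1}^K\tau G_k$ (since $a_{ik}\geq a_0$) and $\frac12\sum_{k=1}^K\tau^2\sum_i h_i u_{i\bar t}^2(k)$, together with the telescoping expression $\frac12\sum_{k=1}^K\big(\sum_{i=0}^{m_k-1}h_i u_i^2(k)-\sum_{i=0}^{m_k-1}h_i u_i^2(k-1)\big)$; all other terms are moved to the right.

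The key step is to treat the telescoping sum correctly. The spatial grids $\omega_{p_0}\subset\omega_{p_1}\subset\cdots$ are nested according to the ordering of $\{s_{p_\ell}\}$, so $m_k\geq m_{k-1}$ and $\omega_{p_{j_k}}\supseteq\omega_{p_{j_{k-1}}}$ whenever $s_k\geq s_{k-1}$, and the reverse inclusion otherwise; in either case the nodes and step sizes with index below $\min(m_k,m_{k-1})$ agree at levels $k-1$ and $k$. Hence, for each $k$,
\[
  \sum_{i=0}^{m_k-1}h_i u_i^2(k)-\sum_{i=0}^{m_k-1}h_i u_i^2(k-1)\geq A_k-A_{k-1}-E_{k-1},
\]
where $E_{k-1}:=\Ind_+(s_k-s_{k-1})\sum_{i=m_{k-1}}^{m_k-1}h_i u_i^2(k-1)$, with $u_i(k-1)=\hat u(x_i;k-1)$ for $i\geq m_{k-1}$ given by the reflection rule~\eqref{Eq:W:1:14}; indeed, when $m_k\leq m_{k-1}$ the subtracted sum is part of $A_{k-1}$ and $E_{k-1}=0$. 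Summing over $k=1,\dots,K$, the telescoping collapses to $A_K-A_0-\sum_{k=1}^{K-1}E_k$ (using $s_0=s_1$, so $E_0=0$). Here $A_0=\sum_{i=0}^{m_0-1}h_i\phi_i^2=\norm{\phi^n}_{L_2(0,s_0)}^2$, and after reindexing $\sum_{k=1}^{K-1}E_k$ is exactly the last sum on the right of~\eqref{eq:thm:first-energy-est-2}, truncated at $K$.

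The remaining terms I would control by Cauchy-Bunyakovski-Schwarz, Young's inequality, and a discrete trace inequality. By Corollary~\ref{cor:cn-bn-unif-bdd}, $\max_{ik}\big(|b_{ik}|+|c_{ik}|\big)\leq CR$, so the convective and reactive terms give $|\sum_i h_i b_{ik}u_{ix}(k)u_i(k)|\leq\frac{a_0}{8}G_k+CA_k$ and $|\sum_i h_i c_{ik}u_i^2(k)|\leq CA_k$; the source term gives $|\sum_i h_i f_{ik}u_i(k)|\leq\frac12\sum_i h_i f_{ik}^2+\frac12 A_k$, whose first part sums to $\frac12\norm{f^n}_{L_2(D)}^2$; the term carrying the known function $p$ gives $|\sum_i h_i p_{ik}u_{ix}(k)|\leq\frac{a_0}{8}G_k+C\sum_i h_i p_{ik}^2$, the second part summing to a fixed constant $\leq C\norm{p}_{L_2(D_\delta)}^2$. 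The two boundary contributions use the discrete trace inequality
\[
  u_0^2(k)+u_{m_k}^2(k)\leq\varepsilon\,G_k+C_\varepsilon A_k,
\]
valid for $\varepsilon<\delta$ and $\tau$ small enough that $\Delta\leq\varepsilon/2$ (by~\eqref{eq:htau-implication}): it follows from $u_{m_k}^2(k)\leq 2u_\ell^2(k)+2(s_k-x_\ell)G_k$ for nodes $x_\ell\in[s_k-\varepsilon,s_k]$, choosing in that range the $\ell$ minimizing $u_\ell^2(k)$ — for which $u_\ell^2(k)\leq\frac2\varepsilon A_k$, since the corresponding step lengths sum to at least $\varepsilon/2$ — and the symmetric argument near $x=0$ (using $\delta\leq s_k\leq\l$). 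Thus $|(\gamma_{s^n}(s^n)')^k-\chi^k_{s^n}|\,|u_{m_k}(k)|\leq\frac12|(\gamma_{s^n}(s^n)')^k-\chi^k_{s^n}|^2+\varepsilon G_k+C_\varepsilon A_k$ and $|g^n_k u_0(k)|\leq\frac12|g^n_k|^2+\varepsilon G_k+C_\varepsilon A_k$; summing in $k$, Jensen's inequality bounds the squared-Steklov-average sums by $\norm{\gamma(s^n(t),t)(s^n)'(t)}_{L_2(0,T)}^2$, $\norm{\chi(s^n(t),t)}_{L_2(0,T)}^2$, and $\norm{g^n}_{L_2(0,T)}^2$, respectively.

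Finally, choosing $\varepsilon$ small enough that all $G_k$-contributions together do not exceed $\frac{a_0}{2}\sum_k\tau G_k$, the accumulated inequality reads $\frac12 A_K+\frac{a_0}{2}\sum_{k=1}^K\tau G_k\leq C\mathcal{R}_K+C\sum_{k=1}^K\tau A_k$, where $\mathcal{R}_K$ is the right-hand bracket of~\eqref{eq:thm:first-energy-est-2} truncated at $K$ (augmented by the fixed constant $\norm{p}_{L_2(D_\delta)}^2$). For $\tau$ small the $k=K$ summand of the last term is absorbed into $\frac12 A_K$, and the discrete Gr\"onwall inequality then yields $\max_{0\leq K\leq n}A_K+\sum_{k=1}^n\tau G_k\leq C\mathcal{R}_n$. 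Since the statement involves the full sums $\sum_{i=0}^{N-1}h_i u_i^2(k)$, $\sum_{i=0}^{N-1}h_i u_{ix}^2(k)$ rather than $A_k$, $G_k$, one concludes using Definition~\ref{discretestatevector}(c): on $(s_k,\l)$ the interpolant consists of at most $n^*=1+\log_2[\l/\delta]$ reflected copies of the interpolant on $(0,s_k)$, so these full sums are comparable to $A_k$, $G_k$ up to a constant depending only on $\l/\delta$, and~\eqref{eq:thm:first-energy-est-2} follows. The main obstacle will be the time summation by parts over a time-varying spatial index set — extracting the correction term $\sum\Ind_+(s_{k+1}-s_k)(\cdots)$ with the correct sign and index range — together with the discrete trace inequality at the moving boundary carrying a genuinely small coefficient on $G_k$, which is exactly what forces $\Delta\to0$, hence $\tau\to0$.
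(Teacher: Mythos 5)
The paper offers no proof of this theorem---it defers entirely to \cite{abdulla15}*{Thm.\ 3.1}---and your argument is precisely the standard one behind that reference (testing~\eqref{Eq:W:1:19} with $\eta_i$ proportional to $\tau u_i(k)$, telescoping in time over the moving index set to produce the $\Ind_+$ correction term, a discrete trace inequality at the boundary nodes made possible by $\Delta=O(\sqrt{\tau})$, and discrete Gr\"onwall), correctly supplemented by the one genuinely new ingredient needed here: absorbing the $b$- and $c$-terms through the uniform bound of Corollary~\ref{cor:cn-bn-unif-bdd}, which is exactly the role that corollary plays elsewhere in the paper. Two minor remarks: your observation that the stated right-hand side should carry an additional $\norm{p}_{\pfunctionspace(D_{\delta})}^2$ contribution is consistent with its appearance in~\eqref{eq:thm:first-energy-est-conseq}; and your final passage from the sums over $i\leq m_{j_k}-1$ to the full sums over $i\leq N-1$ is immediate for the gradient term (each reflection multiplies $\int\lnorm{\hat{u}_x}^2$ by at most $1+n^*$), but for $\max_k\sum_i h_i u_i^2(k)$ the reflected region contributes a bound of the form $\varepsilon G_k+C_{\varepsilon}A_k$ in which $\max_k G_k$ is not controlled by this estimate---a looseness inherited from the statement itself (and from the cited reference), and harmless for every subsequent use of the theorem, which requires only $\max_k A_k$ and $\sum_k\tau G_k$.
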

Theorem~\ref{thm:first-energy-est} is an extension of~\cite{abdulla15}*{Thm.\ 3.1}.
As in~\cite{abdulla13}{Thm. 3.4}, from Theorem~\ref{thm:first-energy-est} we have
\begin{theorem}\label{thm:first-energy-est-conseq}
  Let $[v]_{n}\in \discreteControlSet{}$ for $n=1,2,\ldots$ be a sequence of discrete controls
  with $\bk{\P_n\big([v]_{n}\big)}$ converging weakly in $\controlSpaceWeaklyConverge{}$ to an
  element $v=(\controlVars{})$ (and hence with $(\controlVarsStronglyConvergeWithN)$ converging strongly in $\controlSpaceStronglyConverge$).
  Then $\bk{u^{\tau}}$
  converges as $\tau \to 0$ weakly in $B_{2}^{1,0}(D)$ to a weak solution $u \in V_{2}^{1,0}(\Omega)$ of~\eqref{eq:intro-pde}--\eqref{eq:intro-stefan}.
  Moreover, $u$ satisfies the energy estimate
  \begin{gather}
    \norm{u}_{V_{2}^{1,0}(D)}^{2}
    \leq C\Big[\norm{\phi}_{L_{2}(0,s_{0})}^{2}
    + \sup_n \norm{f^n}_{L_{2}(D)}^{2}
    + \norm{p}_{L_{2}(D)}^{2}
    + \norm{\gamma}_{B_{2}^{1,0}(D)}^{2}
    + \norm{\chi}_{B_{2}^{1,0}(D)}^{2}
    + \norm{g}_{L_{2}(0,T)}^{2}\Big]\label{eq:thm:first-energy-est-conseq}
  \end{gather}
\end{theorem}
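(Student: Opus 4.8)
The proof follows the scheme of \cite{abdulla13}*{Thm.\ 3.4} (itself patterned on \cite{abdulla15}*{Thm.\ 3.2}), adapted to the enlarged control vector; I indicate only the new ingredients. \emph{Step 1 (uniform a priori bounds).} Weak convergence of $\{\P_n([v]_n)\}$ in $\controlSpaceWeaklyConverge$ makes this sequence bounded, so $\norm{g^n}_{L_2(0,T)}$ and $\norm{f^n}_{L_2(D)}$ are bounded; since $s^n\to s$ strongly in $B_2^1(0,T)$ and $B_2^1(0,T)\hookrightarrow C[0,T]$, and $\gamma,\chi\in\chigammaspace(D)$, the norms $\norm{\gamma(s^n(\cdot),\cdot)(s^n)'}_{L_2(0,T)}$ and $\norm{\chi(s^n(\cdot),\cdot)}_{L_2(0,T)}$ are bounded as well, while $\norm{\phi^n}_{L_2(0,s_0)}\to\norm{\phi}_{L_2(0,s_0)}$. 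The one term on the right of \eqref{eq:thm:first-energy-est-2} not controlled a priori is $\sum_{k=1}^{n-1}\Ind_{+}(s_{k+1}-s_{k})\sum_{i=m_j}^{m_{j_{k+1}}-1}h_i u_i^2(k)$: by Corollary~\ref{cor:pn-eq-maps-props:2} one has $s_{k+1}-s_k=O(\tau)$, so in view of \eqref{eq:htau} each inner sum runs over $O(\sqrt{\tau})$ grid points and is dominated by $C\sqrt{\tau}\max_{0\le k\le n}\sum_i h_i u_i^2(k)$; a discrete Gronwall argument as in \cite{abdulla15}*{Thm.\ 3.1} absorbs it into the left side and yields
\[
  \max_{0\le k\le n}\sum_{i=0}^{N-1}h_i u_i^2(k)+\sum_{k=1}^{n}\tau\sum_{i=0}^{N-1}h_i u_{ix}^2(k)\le C,
\]
with $C$ independent of $n$. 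Equivalently $\{u^\tau\}$, $\{\hat u^\tau\}$, $\{\tilde u^\tau\}$ are bounded in $B_2^{1,0}(D)$, with a uniform bound on $\esssup_{0\le t\le T}\norm{u^\tau(\cdot,t)}_{L_2}$.

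\emph{Step 2 (extraction of the limit).} By weak compactness a subsequence satisfies $u^\tau\rightharpoonup u$ in $B_2^{1,0}(D)$. Using \eqref{eq:htau} and \eqref{eq:htau-implication} one checks, as in \cite{abdulla15}, that $u^\tau$, $\hat u^\tau$ and $\tilde u^\tau$ share the common weak $L_2(D)$-limit $u$ and that $\hat u^\tau_x\rightharpoonup u_x$ in $L_2(D)$. Strong convergence $s^n\to s$ in $B_2^1(0,T)$, hence uniform convergence on $[0,T]$, together with Corollary~\ref{cor:pn-eq-maps-props:2}, shows $u$ is concentrated on $\bar\Omega$, and the Step 1 bound gives $u\in V_2^{1,0}(\Omega)$.

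\emph{Step 3 (the integral identity).} Fix a smooth $\Phi$ on $\bar\Omega$ with $\Phi(x,T)=0$, put $\eta_i=\Phi(x_i,t_k)$ in \eqref{Eq:W:1:19}, multiply by $\tau$, and sum over $k=1,\dots,n$. Summation by parts in $t$ converts $\sum_k\tau\sum_i h_i u_{i\bar t}(k)\eta_i$ into an initial-data contribution tending to $-\int_0^{s_0}\phi\,\Phi(\cdot,0)\,dx$ (via part (a) of Definition~\ref{discretestatevector} and $\phi^n\to\phi$) plus a term accounting for the $k$-dependence of the spatial cutoff $m_j$, which to leading order equals the free-boundary trace term $-\int_0^T u(s(t),t)s'(t)\Phi(s(t),t)\,dt$ and whose remainder vanishes with $\tau$. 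The diffusive term converges by $a\in B_\infty^{1,0}(D)$ and the weak convergence of $\hat u^\tau_x$; the convective and reactive terms by the \emph{strong} convergence of $b^n,c^n$ in $L_2(D)$ combined with the uniform bounds on $b_{ik},c_{ik}$ from Corollary~\ref{cor:cn-bn-unif-bdd}; the source term by weak convergence of $f^n$; the fixed-boundary term $g^n_k\eta_0$ by strong convergence of $g^n$; and $[(\gamma_{s^n}(s^n)')^k-\chi^k_{s^n}]\eta_{m_j}$ by uniform convergence $s^n\to s$ and strong convergence $(s^n)'\to s'$ in $L_2(0,T)$. Passing to the limit and using density of smooth functions in $B_2^{1,1}(\Omega)$ shows $u$ satisfies \eqref{eq:intro-weak-v2}, i.e.\ is a weak solution in the sense of Definition~\ref{defn:intro-weak-v2}. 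Uniqueness of such a weak solution for the given $v$ (as in \cite{abdulla15}) upgrades subsequential convergence to convergence of the whole sequence $\{u^\tau\}$.

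\emph{Step 4 (the energy estimate, and the main difficulty).} Dropping the already-absorbed $\Ind_+$ term in \eqref{eq:thm:first-energy-est-2} and letting $\tau\to0$: weak lower semicontinuity bounds $\norm{u}_{V_2^{1,0}(\Omega)}^2$ by the $\liminf$ of the left side, while on the right $\norm{\phi^n}_{L_2(0,s_0)}\to\norm{\phi}_{L_2(0,s_0)}$, $\norm{g^n}_{L_2(0,T)}\to\norm{g}_{L_2(0,T)}$, $\norm{f^n}_{L_2(D)}^2\le\sup_n\norm{f^n}_{L_2(D)}^2$, and $\norm{\gamma(s^n(\cdot),\cdot)(s^n)'}_{L_2(0,T)}^2+\norm{\chi(s^n(\cdot),\cdot)}_{L_2(0,T)}^2\le C\big(\norm{\gamma}_{B_2^{1,0}(D)}^2+\norm{\chi}_{B_2^{1,0}(D)}^2\big)$ by uniform convergence $s^n\to s$; the $p$-terms of \eqref{alma} contribute the $\norm{p}_{L_2(D)}^2$ term exactly as in \cite{abdulla15}. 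This gives \eqref{eq:thm:first-energy-est-conseq}. The principal obstacle throughout is the moving-boundary bookkeeping: absorbing the $\Ind_+$ sum relies on the precise interplay of $h=O(\sqrt\tau)$, $\lnorm{s_k-s_{k-1}}\le C'\tau$, and the monotone grid refinement, and in Step 3 one must extract from the summation by parts in $t$ exactly the trace term $u(s(t),t)s'(t)\Phi(s(t),t)$ on a grid that is both non-uniform and $n$-dependent near $x=s^n(t)$; the remaining estimates are routine, if lengthy, adaptations of \cite{abdulla13,abdulla15}.
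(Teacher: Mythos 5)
The paper does not actually prove this theorem: it is stated as following from Theorem~\ref{thm:first-energy-est} ``as in \cite{abdulla13}, Thm.~3.4,'' with the single added remark that the piecewise-constant interpolation of $b_{ik}$ is $L_2$-equivalent to $b^n$ by CBS. Your four-step outline is exactly that scheme --- uniform bounds from the first energy estimate, weak compactness, passage to the limit in the summation identity \eqref{Eq:W:1:19} (including the extraction of the $u(s(t),t)s'(t)\Phi$ trace term from the $k$-dependence of $m_j$), and lower semicontinuity for \eqref{eq:thm:first-energy-est-conseq} --- and the new ingredients you single out (strong $L_2(D)$ convergence of $b^n,c^n$ paired with the weakly convergent $u_{ix}$, via Corollary~\ref{cor:cn-bn-unif-bdd} and the CBS equivalence of $\{b_{ik}\}$ to $b^n$) are precisely the points the paper flags. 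So in substance you take the same route.

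One step, as written, would fail. In Step~1 you bound each inner sum $\sum_{i=m_j}^{m_{j_{k+1}}-1}h_iu_i^2(k)$ by $C\sqrt{\tau}\,\max_k\sum_ih_iu_i^2(k)$ and then sum over $k$. First, the bound does not follow from counting grid points: the inner sum is at most $(s_{k+1}-s_k)\max_iu_i^2(k)$, and $\max_iu_i^2(k)$ is not controlled by $\sum_ih_iu_i^2(k)$. Second, even granting it, there are $n\sim T/\tau$ terms, so the total is of order $n\sqrt{\tau}=T\tau^{-1/2}\to\infty$ and nothing is absorbed. The mechanism that actually works (and is the one in \cite{abdulla15}, Thm.~3.1) is different: the factor $\Ind_{+}(s_{k+1}-s_k)$ restricts to increments where $s$ increases, each strip contributes at most $(s_{k+1}-s_k)_{+}\max_x\hat{u}^2(x;k)$ with $(s_{k+1}-s_k)_{+}\le C'\tau$ by Corollary~\ref{cor:pn-eq-maps-props:2}, and the one-dimensional interpolation inequality $\max_x\hat{u}^2(x;k)\le C\norm{\hat{u}(\cdot;k)}_{L_2}\big(\norm{\hat{u}(\cdot;k)}_{L_2}+\norm{\hat{u}_x(\cdot;k)}_{L_2}\big)$ together with Cauchy's inequality with $\epsilon$ puts a small multiple of $\sum_k\tau\sum_ih_iu_{ix}^2(k)$ on the left and leaves a term handled by discrete Gronwall. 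With that substitution your Steps~2--4 go through as in the cited references.
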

Equivalence of the piecewise constant interpolation $b_{ik}$ to $b^{n}(x,t)$ in $L_{2}(D)$ follows from application of CBS inequality.
From Theorem~\ref{thm:first-energy-est-conseq} in particular we have
\begin{corollary}
  For any $v = (\controlVars{}) \in {V_R}$, there exists a weak solution $u \in V_{2}^{1,0}(\Omega)$ of the Neumann problem~\eqref{eq:intro-pde}--\eqref{eq:intro-stefan} satisfying the energy estimate~\eqref{eq:thm:first-energy-est-conseq}.
\end{corollary}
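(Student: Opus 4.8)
The plan is to read the corollary off from Theorem~\ref{thm:first-energy-est-conseq} by producing, for a prescribed $v=(\controlVars{})\in V_R$, a sequence of discrete controls whose continuous reconstructions converge weakly to $v$ itself. Fix any $R'>R$ (say $R'=2R$) and work for the moment within the control ball of radius $R'$: since $\norm{v}_{\controlSpace{}}\le R<R'$, the vector $v$ lies strictly inside that ball, so Lemma~\ref{thm:pn-eq-maps-props:1}, invoked with radius $R'$ and margin $\epsilon:=R'-R>0$, gives $[v]_n:=\Q{}_n(v)\in V_{R'}^{n}$ for all large $n$. By the construction of $\Q{}_n$ and $\P{}_n$ --- Steklov averaging and quadratic/linear interpolation for the $s$-, $g$-, $f$-components, and truncation of the $\bcontrolspace(D)$-Fourier expansion for the $b$- and $c$-components --- one has $\P{}_n\big(\Q{}_n(v)\big)\to v$ strongly in $\controlSpaceStronglyConverge{}$ and weakly in $\controlSpace{}$ as $n\to\infty$; this is the standard reconstruction property underlying the definitions of Section~\ref{sec:fullydisc-optimal-control-prob}, established exactly as the corresponding statement in~\cite{abdulla15}, and it uses that $\{\psi_k\}$ is a complete orthonormal system in $\bcontrolspace(D)$.

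Theorem~\ref{thm:first-energy-est-conseq}, applied with radius $R'$ to the sequence $[v]_n=\Q{}_n(v)$, then yields a weak solution $u\in V_2^{1,0}(\Omega)$ of the Neumann problem~\eqref{eq:intro-pde}--\eqref{eq:intro-stefan} associated with the control $v$, together with the energy estimate~\eqref{eq:thm:first-energy-est-conseq}. There the quantity $\sup_n\norm{f^n}_{L_2(D)}^2$ is at most $\norm{f}_{L_2(D)}^2$ by the computation in~\eqref{eq:Pn-Qn-fterm-est1} (the $f$-component of $\P{}_n(\Q{}_n(v))$ has $L_2(D)$-norm no larger than $\norm{f}_{L_2(D)}$), while the remaining terms involve only the fixed data $\phi$, $p$, $\gamma$, $\chi$ and the component $g$; the constant may now depend on $R'$, hence on $R$, but since $\l(\cdot)$ is nondecreasing this is still an estimate of exactly the asserted form. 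This proves the corollary.

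The one point that forces the device above --- and the \textbf{main obstacle} --- is that Lemma~\ref{thm:pn-eq-maps-props:1} places $\Q{}_n(v)$ in the discrete control set only for $v$ in the \emph{strictly} smaller ball $V_{R-\epsilon}$, because the $O(\tau)$ error introduced when discretizing the $s$- and $g$-components (via Steklov averages and difference quotients) must be absorbed by an $\epsilon$-margin; enlarging the radius from $R$ to $R'$ manufactures that margin without changing the family of controls to which the conclusion applies. Everything else --- the reconstruction convergences and the elementary inequality~\eqref{eq:Pn-Qn-fterm-est1} --- is routine.
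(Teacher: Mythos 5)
Your proposal is correct and follows essentially the same route as the paper, which states the corollary as an immediate consequence of Theorem~\ref{thm:first-energy-est-conseq} applied to the sequence $[v]_n=\Q{}_n(v)$ with $\P{}_n(\Q{}_n(v))\to v$. Your explicit handling of the $\epsilon$-margin in Lemma~\ref{thm:pn-eq-maps-props:1} by enlarging the control ball to radius $R'$ is a detail the paper glosses over, and it is resolved correctly.
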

Given any discrete control vector $[v]_n$ and the corresponding discrete state
vector $\big[u([v]_n)\big]_n$, define the constant continuation
$\big[\tilde{u}([v]_n)\big]_n$ by
$
  \tilde{u}_i(k) =
    u_i(k)$ for $0 \leq i \leq m_j$
    and $\tilde{u}_i(k) = u_{m_j}(k)$ for $m_j < i$
for $k=0,\ldots,n$.
\begin{theorem}[Second Energy Estimate]\label{thm:second-energy-est}
  For $\tau$ sufficiently small, and for any discrete control $[v]_{n}\in
    \discreteControlSet{}$, the modified discrete state vector $\big[
      \tilde{u}([v]_n)\big]$ satisfies
  \begin{gather}
    \max_{1 \leq k \leq n}\sum_{i=0}^{m_{j}-1} h_i u_{ix}^2(k)
    + \tau \sum_{k=1}^n \sum_{i=0}^{m_j-1} h_i \tilde{u}_{i\bar{t}}(k)^2
    + \tau^2 \sum_{k=1}^n \sum_{i=0}^{m_j-1} h_i \tilde{u}_{ix\bar{t}}^2(k)
    \leq C\bigg[
    \norm{\phi^n}_{L_2(0,s_0)}^2
    + \norm{\phi}_{B_2^1(0,s_0)}^2\nonumber
    \\
    + \norm{f^n}_{L_2(D)}^2
    + \norm{g^n}_{B_2^{1/4}(0,T)}^{2}
    + \norm{p}_{\pfunctionspace(D_{\delta})}^2
    + \norm{\gamma(s^n(t), t) \big( s^n\big)'(t)}_{B_2^{1/4}(0,T)}^{2}
    + \norm{\chi(s^n(t), t)}_{B_2^{1/4}(0,T)}^{2}
    \bigg]\label{eq:thm:second-energy-est}
  \end{gather}
\end{theorem}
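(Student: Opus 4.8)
The plan is to imitate, at the discrete level, the classical ``multiply the parabolic equation by $u_t$'' argument: test the summation identity~\eqref{Eq:W:1:19} with $\eta_i=\tilde u_{i\bar t}(k)$, $0\le i\le m_j$, multiply by $\tau$, and sum over $k=1,\dots,K$ for an arbitrary $K\le n$, taking $\max_{1\le K\le n}$ in the first term at the end. Before doing so one must account for the dependence of $m_j=m_{j_k}$ on $k$: when $s_{p_{j_k}}>s_{p_{j_{k-1}}}$ the differences $\tilde u_{i\bar t}(k)$ for $m_{j_{k-1}}\le i<m_{j_k}$ are formed with the constant continuation of $u(k-1)$, and the resulting extra contributions are collected and estimated exactly as in the proof of Theorem~\ref{thm:first-energy-est}, using the first energy estimate~\eqref{eq:thm:first-energy-est-2}, the $\tau$-Lipschitz bound $|s_k-s_{k-1}|\le C'\tau$ of Corollary~\ref{cor:pn-eq-maps-props:2}, and~\eqref{eq:htau-implication}.

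The principal term $\sum_{k=1}^{K}\tau\sum_{i=0}^{m_j-1}h_i a_{ik}u_{ix}(k)u_{ix\bar t}(k)$ is handled by the elementary identity $2\tau a\,d\,d_{\bar t}=a\,d^2-a\,d_-^2+a\tau^2 d_{\bar t}^2$ with $d=u_{ix}(k)$, followed by Abel summation in $k$. Uniform ellipticity $a\ge a_0>0$ then produces, on the left, the quantities $\tfrac{a_0}{2}\sum_i h_i u_{ix}^2(K)$ and $\tfrac{a_0}{2}\tau^2\sum_{k=1}^{K}\sum_i h_i u_{ix\bar t}^2(k)$ (the first and third terms of~\eqref{eq:thm:second-energy-est} after taking $\max_K$), while on the right it leaves $\tfrac12\sum_i h_i a_{i1}\phi_{ix}^2\le CM\,\norm{\phi}_{B_2^1(0,s_0)}^2$ together with the remainder $\tfrac{\tau}{2}\sum_{k=1}^{K-1}\sum_i h_i a_{i,k+1,\bar t}u_{ix}^2(k)$. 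The last quantity is \emph{not} absorbed directly: since $\tau\sum_k\esssup_i|a_{i,k+1,\bar t}|$ is comparable to $\int_{-1}^{T}\esssup_x|a_t|\,dt$, which is finite but not small, one keeps $u_{ix}^2(k)$ under the sum, runs the estimate up to arbitrary $K$, and removes it by a discrete Gronwall inequality with the summable weights $\tau\esssup_i|a_{i,k+1,\bar t}|$, absorbing it into the final constant.

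For the remaining interior terms: the convective and reactive contributions $-b_{ik}u_{ix}(k)\eta_i-c_{ik}u_i(k)\eta_i$ give, after the CBS inequality, the uniform boundedness of $\{b_{ik}\}$, $\{c_{ik}\}$ (Corollary~\ref{cor:cn-bn-unif-bdd}) and Young's inequality, a contribution $\le\varepsilon\sum_{k}\tau\sum_i h_i u_{i\bar t}^2(k)$ (absorbed by the middle left-hand term) plus a multiple of $\sum_k\tau\sum_i h_i\big(u_{ix}^2(k)+u_i^2(k)\big)$, which is controlled by the first energy estimate (Theorem~\ref{thm:first-energy-est}, \eqref{eq:thm:first-energy-est-2}, and its consequence~\eqref{eq:thm:first-energy-est-conseq}); the source term $f_{ik}\eta_i$ contributes $\varepsilon\sum_k\tau\sum_i h_i u_{i\bar t}^2(k)+C_\varepsilon\,\norm{f^n}_{L_2(D)}^2$. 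The term $p_{ik}\eta_{ix}=\tau p_{ik}u_{ix\bar t}(k)$ cannot be estimated by CBS directly — that would cost a factor $\tau^{-1}$ against the only reserve $\tau^2\sum\sum h_i u_{ix\bar t}^2$ — so it is summed by parts in $k$, yielding $\sum_i h_i p_{iK}u_{ix}(K)-\sum_i h_i p_{i1}\phi_{ix}-\sum_{k=1}^{K-1}\tau\sum_i h_i p_{i,k+1,\bar t}u_{ix}(k)$; the endpoint terms are bounded via the embedding $\pfunctionspace(D_\delta)\hookrightarrow C\big([0,T];L_2(0,\delta)\big)$ and Young's inequality (the time slice $\sum_i h_i p_{iK}^2\le C\,\norm{p}_{\pfunctionspace(D_\delta)}^2$, while $\varepsilon\sum_i h_i u_{ix}^2(K)$ is absorbed), and the remaining sum by CBS, using that the discrete $t$-difference of $p$ is controlled by $\norm{p}_{\pfunctionspace(D_\delta)}$ and that $\sum_k\tau\sum_i h_i u_{ix}^2(k)$ is controlled by the first energy estimate; the support of $p$ in $D_\delta$ keeps this entirely away from the free boundary.

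The main obstacle is the pair of boundary contributions $\sum_{k=1}^{K}\tau\,g^n_k u_{0\bar t}(k)$ and $\sum_{k=1}^{K}\tau\big[(\gamma_{s^n}(s^n)')^k-\chi^k_{s^n}\big]u_{m_j\bar t}(k)$. CBS again loses a power of $\tau$, while a full summation by parts in $k$ would place a whole $t$-difference of $g^n$ (respectively of the Stefan data) on the right, i.e.\ the norms $\norm{g^n}_{B_2^1(0,T)}$ etc., which are stronger than the $B_2^{1/4}(0,T)$ permitted by~\eqref{eq:thm:second-energy-est}. The resolution is a discrete fractional trace estimate: viewing these sums as duality pairings in the time variable of $g^n$ (respectively of the Steklov-averaged data $(\gamma_{s^n}(s^n)')^k$, $\chi^k_{s^n}$) with the discrete $t$-derivative of the lateral trace $[u_0]_n$ (respectively $[u_{m_j}]_n$) of the discrete state, and showing that this lateral trace is controlled, in a discrete analogue of the $B_2^{3/4}(0,T)$-norm, by a small multiple of the left-hand side of~\eqref{eq:thm:second-energy-est} — the $\tau^2$-weighted second-difference term being precisely the reserve that makes this possible — the duality between $B_2^{1/4}$ and $B_2^{-1/4}$ together with Young's inequality yields a bound of the form $\varepsilon$ times the left-hand side of~\eqref{eq:thm:second-energy-est} plus $C_\varepsilon\,\norm{g^n}_{B_2^{1/4}(0,T)}^2$, and, analogously, $+\,C_\varepsilon\big(\norm{\gamma(s^n(t),t)(s^n)'(t)}_{B_2^{1/4}(0,T)}^2+\norm{\chi(s^n(t),t)}_{B_2^{1/4}(0,T)}^2\big)$. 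That the Steklov averages $(\gamma_{s^n}(s^n)')^k$ and $\chi^k_{s^n}$ are themselves bounded in the relevant discrete $B_2^{1/4}$-norm follows from $\gamma,\chi\in\chigammaspace(D)$ together with the $\tau$-Lipschitz bound of Corollary~\ref{cor:pn-eq-maps-props:2}, which controls the $t$-moduli of continuity of $t\mapsto\gamma(s^n(t),t)$, $\chi(s^n(t),t)$, $s^n$ and $(s^n)'$. Collecting all contributions, choosing $\varepsilon$ small to absorb every term proportional to the left-hand side, and applying the discrete Gronwall step for the $a_t$-remainder yields~\eqref{eq:thm:second-energy-est}. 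Establishing the discrete fractional trace inequality just described — in particular verifying that the $\tau^2$-weighted term supplies exactly the needed reserve and that no norm of the data stronger than $B_2^{1/4}(0,T)$ enters — is the delicate technical core of the argument.
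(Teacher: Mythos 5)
Your handling of the interior terms coincides with the paper's: the test function $\eta_i=2\tau\tilde u_{i\bar t}(k)$ in~\eqref{Eq:W:1:19}, Abel summation for the elliptic term, Corollary~\ref{cor:cn-bn-unif-bdd} plus Cauchy's inequality with $\epsilon$ for the $b$, $c$, $f$ contributions, and summation by parts in $k$ for the $p$-term using its compact support in $(0,\delta)$ — all of that is what the paper does. The genuine gap is in the boundary terms $\tau\sum_k g^n_k\tilde u_{0\bar t}(k)$ and $\tau\sum_k\big[(\gamma_{s^n}(s^n)')^k-\chi^k_{s^n}\big]\tilde u_{m_j\bar t}(k)$, which you correctly identify as the main obstacle but resolve by a ``discrete fractional trace estimate'' that you leave unproven and that, as stated, cannot hold. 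Your duality argument requires the lateral trace of the discrete state to be controlled in (a discrete analogue of) $B_2^{3/4}(0,T)$ by a small multiple of the left-hand side of~\eqref{eq:thm:second-energy-est}. But the left-hand side only provides, uniformly in $\tau$, control of the type $u_x\in L_\infty(L_2)$ and $u_{\bar t}\in L_2$; averaging the identity $u(0,t)-u(0,t')=[u(x,t)-u(x,t')]-\int_0^x[u_\xi(\xi,t)-u_\xi(\xi,t')]\,d\xi$ over $x\in(0,h)$ and optimizing in $h$ gives only $\lnorm{u(0,t)-u(0,t')}\leq C\lnorm{t-t'}^{1/4}$, i.e.\ trace regularity $B_2^{s}(0,T)$ for $s<1/4$ — nowhere near $3/4$. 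The $\tau^2$-weighted second-difference term cannot supply the missing half derivative: it is $O(\tau^2)$ times the full $\sum\sum h_i\tilde u_{ix\bar t}^2$ energy and therefore degenerates as $\tau\to 0$, giving no uniform reserve. So the pairing $\norm{g^n}_{B_2^{1/4}}\cdot\norm{[u_0]_n}_{B_2^{3/4}}$ is not closable, and this step would fail.

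The paper circumvents the boundary terms entirely by an \emph{inverse} trace (lifting) argument rather than a direct trace estimate: since $g^n$, $\chi(s^n(t),t)$ and $\gamma(s^n(t),t)(s^n)'(t)$ lie in $B_2^{1/4}(0,T)$, there exists a solution $\Psi\in B_2^{2,1}(D)$ of the heat equation with precisely these Neumann/flux data on $x=0$ and $x=s^n(t)$ and initial datum $\phi$, satisfying $\norm{\Psi}_{B_2^{2,1}(D)}\leq C\big[\norm{g^n}_{B_2^{1/4}}+\norm{\phi}_{B_2^{1}}+\norm{\chi(s^n(t),t)-\gamma(s^n(t),t)(s^n)'(t)}_{B_2^{1/4}}\big]$ (Ladyzhenskaya--Solonnikov--Uraltseva). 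Replacing $u$ by $u-\Psi$ and $f$ by $f-L\Psi\in L_2(D)$ makes the offending boundary terms vanish identically, and the resulting homogeneous estimate closes using only the first energy estimate. You should replace your duality step with this construction; without it the argument does not go through.
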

\begin{proof}
  In~\eqref{Eq:W:1:19}, take $\eta = 2\tau \tilde{u}_{i\bar{t}}(k)$ to derive
  \begin{gather}
    \sum_{i=0}^{m_j-1}2 \tau h_i \Big[ a_{ik}u_{ix}(k) \tilde{u}_{ix\bar{t}}(k)
    -b_{ik}u_{ix}(k) \tilde{u}_{i\bar{t}}(k)
    -c_{ik}u_i(k) \tilde{u}_{i\bar{t}}(k)
    +f_{ik} \tilde{u}_{i\bar{t}}(k)
    +p_{ik} \tilde{u}_{ix\bar{t}}(k)\nonumber
    \\
    +u_{i\bar{t}}(k) \tilde{u}_{i\bar{t}}(k) \Big]
    + 2\tau\Big[ {\big(\gamma_{s^{n}}(s^{n})'\big)^{k}}-{\chi_{s^{n}}^{k}} \Big] \tilde{u}_{m_j\bar{t}}(k)
    +2\tau g^n_k \tilde{u}_{0\bar{t}}(k)=0\label{eq:second-energy-est-disc-state}
  \end{gather}
  Arguing as in~\cite{abdulla15}*{Thm.\ 3.3} (in particular, Eq.\ 3.46), it follows that
\begin{gather}
    \sum_{i=0}^{m_{j_q}-1} h_i  \tilde{u}_{ix}^2(q)
    + \tau^2 \sum_{k=1}^q \sum_{i=0}^{m_j-1} h_i \tilde{u}_{ix\bar{t}}^2(k)
    + \tau \sum_{k=1}^q\sum_{i=0}^{m_j-1} h_i\tilde{u}_{i\bar{t}}^2(k)
    \leq C\sum_{i=0}^{m_{j_0}-1} h_i \phi_{ix}^2\nonumber
    \\
    + C \sum_{k=1}^q \sum_{i=0}^{m_j-1}\tau h_i \tilde{u}_{ix}^2(k)
    + C \max_{1\leq k \leq q} \sum_{i=0}^{m_j-1} h_i \tilde{u}_i^2(k)
    + C \sum_{k=1}^q \sum_{i=0}^{m_j-1}\tau h_i f_{ik}^2
    - \sum_{k=1}^q \sum_{i=0}^{m_j-1}\tau h_i p_{ik} \tilde{u}_{ix\bar{t}}(k)\nonumber
    \\
    - 2 \tau \sum_{k=1}^q \Big[ {\big(\gamma_{s^{n}}(s^{n})'\big)^{k}} - {\chi_{s^{n}}^{k}} \Big] \tilde{u}_{m_j\bar{t}}(k)
    - 2 \tau \sum_{k=1}^q g^n_k \tilde{u}_{0\bar{t}}(k)\label{eq:second-energy-est-before-change-p}
  \end{gather}
  for any $1 \leq q \leq n$.
  The second and third terms on the right-hand side of~\eqref{eq:second-energy-est-before-change-p} can be estimated using the first energy estimate;
For the term containing $p_{ik}$, we apply summation by parts; by virtue of the
compact support of $p$ with respect to $x$ in $(0,\delta)$, there exists
$i_{\delta}$ with $i_{\delta} < m_{j_k}-1$ for all $k$ such that $p_{ik} \equiv 0$
for $i>i_\delta$, and hence
\begin{gather}
  \sum_{k=1}^q \sum_{i=0}^{m_j-1}\tau h_i p_{ik} \tilde{u}_{ix\bar{t}}(k)
  = \sum_{i=0}^{i_{\delta}} h_i p_{iq} \tilde{u}_{ix}(q)
  - \sum_{i=0}^{i_{\delta}} h_i p_{i,1} \phi_{ix}
  - \sum_{k=1}^{q-1} \sum_{i=0}^{i_{\delta}} \tau h_i p_{i,k+1,\bar{t}} \tilde{u}_{ix}(k)\label{eq:second-energy-est-p-manip}
\end{gather}
Therefore,
from~\eqref{eq:second-energy-est-before-change-p},~\eqref{eq:second-energy-est-p-manip}, Corollary~\ref{cor:cn-bn-unif-bdd}, and Cauchy inequality with $\epsilon$, it follows that
\begin{gather}
  \frac{a_0}{2} \sum_{i=0}^{m_{j_q}-1} h_i  \tilde{u}_{ix}^2(p)
  + \tau^2 a_0 \sum_{k=1}^q \sum_{i=0}^{m_j-1} h_i \tilde{u}_{ix\bar{t}}^2(k)
  + \frac{\tau}{2} \sum_{k=1}^q\sum_{i=0}^{m_j-1} h_i\tilde{u}_{i\bar{t}}^2(k)
  \leq C \bigg{\{}\sum_{i=0}^{m_{j_0}-1} h_i \phi_{ix}^2\nonumber
  \\
  + \sum_{k=1}^q \sum_{i=0}^{m_j-1}\tau h_i \tilde{u}_{ix}^2(k)
  +  \max_{1\leq k \leq p} \sum_{i=0}^{m_j-1} h_i \tilde{u}_i^2(k)
  + \sum_{k=1}^q \sum_{i=0}^{m_j-1}\tau h_i f_{ik}^2
  + \sum_{i=0}^{i_{\delta}} h_i p_{iq}^2\nonumber
  \\
  + \sum_{i=0}^{i_{\delta}} h_i p_{i1}^2
  + \sum_{k=1}^{q-1} \sum_{i=0}^{i_{\delta}} \tau h_i p_{i,k+1,\bar{t}}^2
  + \tau\sum_{k=1}^q \Big[ {\big(\gamma_{s^{n}}(s^{n})'\big)^{k}} - {\chi_{s^{n}}^{k}} \Big] \tilde{u}_{m_j\bar{t}}(k)
  + \tau \sum_{k=1}^q g^n_k \tilde{u}_{0\bar{t}}(k)\bigg{\}}\label{eq:second-energy-est-after-change-p-3}
\end{gather}
for some $C$ independent of $\tau$.
By CBS inequality and Fubini's theorem we have
\begin{equation}
  \tau \sum_{k=1}^{m-1} \sum_{i=0}^{m_j-1} h_i p_{i,k+1,t}^2
    \leq \frac{1}{\tau^2} \sum_{k=1}^{m-1} \int_0^{s_k} \int_{t_{k-1}}^{t_{k}}\lnorm{p(x,t+\tau)  -  p(x,t)}^2\,dt\, dx
  \leq \norm{p_t}_{L_2(D)}^2\label{eq:p-tbar-term-est-final}
\end{equation}

By CBS inequality and Sobolev embedding theorem~\cites{nikolskii75, besov79}
\begin{gather}
  \sum_{i=0}^{m_j-1} h_i p_{ik}^2
  = \sum_{i=0}^{m_j-1} \frac{1}{h_i \tau^2}
  \left(\int_{x_i}^{x_{i+1}}\int_{t_{k-1}}^{t_k}p(x,t)\,dt \,dx\right)^2
  \leq \frac{1}{\tau} \int_{t_{k-1}}^{t_k} \int_{0}^{\delta}p^2(x,t)  \,dx \,dt
  \leq C \norm{p}_{\pfunctionspace(D_{\delta})}^2\label{eq:p-steklovtrace-est}
\end{gather}
Having~\eqref{eq:p-steklovtrace-est} and~\eqref{eq:p-tbar-term-est-final}, from~\eqref{eq:second-energy-est-after-change-p-3} it follows that
\begin{gather}
  \frac{a_0}{2} \sum_{i=0}^{m_{j_q}-1} h_i  \tilde{u}_{ix}^2(q)
  + \tau^2 a_0 \sum_{k=1}^q \sum_{i=0}^{m_j-1} h_i \tilde{u}_{ix\bar{t}}^2(k)
  + \frac{\tau}{2} \sum_{k=1}^q\sum_{i=0}^{m_j-1} h_i\tilde{u}_{i\bar{t}}^2(k)
  \leq C \bigg{\{}\sum_{i=0}^{m_{j_0}-1} h_i \phi_{ix}^2\nonumber
  \\
  + \sum_{k=1}^q \sum_{i=0}^{m_j-1}\tau h_i \tilde{u}_{ix}^2(k)
  +  \max_{1\leq k \leq q} \sum_{i=0}^{m_j-1} h_i \tilde{u}_i^2(k)
  + \sum_{k=1}^q \sum_{i=0}^{m_j-1}\tau h_i f_{ik}^2
  + \norm{p}_{\pfunctionspace(D_{\delta})}^2\nonumber
  \\
  + \tau\sum_{k=1}^q \Big[ {\big(\gamma_{s^{n}}(s^{n})'\big)^{k}} - {\chi_{s^{n}}^{k}} \Big] \tilde{u}_{m_j\bar{t}}(k)
  + \tau \sum_{k=1}^q g^n_k \tilde{u}_{0\bar{t}}(k)\bigg{\}}\label{eq:second-energy-est-after-change-p-5}
\end{gather}
  Since this inequality holds for all $1 \leq q \leq n$, it follows that
  \begin{gather}
    \frac{a_0}{2} \max_{1 \leq k \leq n}\sum_{i=0}^{m_{j_k}-1} h_i  \tilde{u}_{ix}^2(k)
    + \tau^2 a_0 \sum_{k=1}^n \sum_{i=0}^{m_j-1} h_i \tilde{u}_{ix\bar{t}}^2(k)
    + \frac{\tau}{2} \sum_{k=1}^n\sum_{i=0}^{m_j-1} h_i\tilde{u}_{i\bar{t}}^2(k)
    \leq C \bigg{\{}\sum_{i=0}^{m_{j_0}-1} h_i \phi_{ix}^2\nonumber
    \\
    + \sum_{k=1}^n \sum_{i=0}^{m_j-1}\tau h_i \tilde{u}_{ix}^2(k)
    +  \max_{1\leq k \leq n} \sum_{i=0}^{m_j-1} h_i \tilde{u}_i^2(k)
    + \norm{f}_{L_2(D)}^2
    + \norm{p}_{\pfunctionspace(D_{\delta})}^2\nonumber
    \\
    + \tau\sum_{k=1}^n \Big[ {\big(\gamma_{s^{n}}(s^{n})'\big)^{k}} - {\chi_{s^{n}}^{k}} \Big] \tilde{u}_{m_j\bar{t}}(k)
    + \tau \sum_{k=1}^n g^n_k \tilde{u}_{0\bar{t}}(k)\bigg{\}}\label{eq:second-energy-est-lemma-prefinal}
  \end{gather}
  The boundary terms containing $u_{\bar{t}}$ present another challenge.
  The proof of the corresponding energy estimate in~\cite{abdulla13} gives the idea to use inverse embedding of Sobolev spaces.

  If $\gamma, \chi \in B_2^{1,1}(D)$ and $[v]_n = (\discreteControlVars{}) \in
    \discreteControlSet$, then for $n$ large enough,
    \(
    \P_n([v]_n) \in V_{R+1}
    \)
  by Theorem~\ref{thm:pn-eq-maps-props:1},
  and hence the traces of $\chi$ and $\gamma \cdot (s^n)'$ on the curves $x=s^n(t)$ are in $B_2^{1/4}(0,T)$~\cites{nikolskii75, besov79} and
  \begin{gather}
    \norm{{\gamma\big(s^{n}(t),t\big)(s^{n})'(t)}}_{B_2^{1/4}(0,T)} \leq C \norm{\gamma}_{B_2^{1,1}(D)},\quad
    \norm{{\chi\big(s^{n}(t),t\big)}}_{B_2^{1/4}(0,T)} \leq C \norm{\chi}_{B_2^{1,1}(D)}\label{eq:second-energy-est-gamma-chi-bound}
  \end{gather}
  Let $\Psi(x,t) \in B_2^{2,1}(D)$ be a solution of the heat equation satisfying
  \begin{gather}
    \Psi(x,0)=\phi(x),~\text{for}~x \in [0,s_0],
    \quad
    a(0,t)\Psi_x(0,t) = g^n(t),~\text{for a.e.}~t \in [0,T],\nonumber
    \\
    a(s^n,t)\Psi_x(s^n(t),t) = {\chi\big(s^{n}(t),t\big)} - {\gamma\big(s^{n}(t),t\big)(s^{n})'(t)},
    ~\text{for a.e.} ~ t \in [0,T]\nonumber
  \end{gather}
  and
  \begin{gather}
    \norm{\Psi}_{B_2^{2,1}(D)}
    \leq C \Big[
    \norm{g^n}_{B_2^{1/4}(0,T)}
    + \norm{\phi}_{B_2^1(0,s_0)}
    + \norm{{\chi\big(s^{n}(t),t\big)} - {\gamma\big(s^{n}(t),t\big)(s^{n})'(t)}}_{B_2^{1/4}(0,T)}
    \Big]\label{eq:second-energy-est-w-bound}
  \end{gather}
  Existence of such $\Psi$ follows from e.g.~\cite{ladyzhenskaya68}*{Ch.\ 3, Thm.\ 6.1}.
  Then replacing $u$, $s$ and $g$ with $u-\Psi$, $s^n$, and $g^n$ in~\eqref{eq:second-energy-est-lemma-prefinal} with $f(x)$ replaced by
  $f(x) - L\Psi(x) \in L_2(D)$, we derive
  \begin{gather}
    \frac{a_0}{2} \max_{1 \leq k \leq n} \sum_{i=0}^{m_{j_k}-1} h_i  \tilde{u}_{ix}^2(k)
    + \tau^2 a_0 \sum_{k=1}^n \sum_{i=0}^{m_j-1} h_i \tilde{u}_{ix\bar{t}}^2(k)
    + \frac{\tau}{2} \sum_{k=1}^n\sum_{i=0}^{m_j-1} h_i\tilde{u}_{i\bar{t}}^2(k)
    \leq C \bigg{\{}\sum_{i=0}^{m_{j_0}-1} h_i \phi_{ix}^2\nonumber
    \\
    + \sum_{k=1}^n \sum_{i=0}^{m_j-1}\tau h_i \tilde{u}_{ix}^2(k)
    +  \max_{1\leq k \leq n} \sum_{i=0}^{m_j-1} h_i \tilde{u}_i^2(k)
    + \norm{f}_{L_2(D)}^2
    + \norm{L\Psi}_{L_2(D)}^2
    + \norm{p}_{\pfunctionspace(D_{\delta})}^2\bigg{\}}\label{eq:second-energy-est-v}
  \end{gather}
  By the first energy estimate~\eqref{eq:thm:first-energy-est-2}, along with~\eqref{eq:second-energy-est-w-bound}, and~\eqref{eq:second-energy-est-gamma-chi-bound},
  from~\eqref{eq:second-energy-est-v} it follows that for $\tau$ sufficiently small, $u$ satisfies
  \begin{gather}
    \frac{a_0}{2} \max_{1 \leq k \leq n} \sum_{i=0}^{m_{j_k}-1} h_i  \tilde{u}_{ix}^2(k)
    + \tau^2 a_0 \sum_{k=1}^n \sum_{i=0}^{m_j-1} h_i \tilde{u}_{ix\bar{t}}^2(k)
    + \frac{\tau}{2} \sum_{k=1}^n \sum_{i=0}^{m_j-1} h_i\tilde{u}_{i\bar{t}}^2(k)
    \leq C \bigg{\{}\norm{\phi}_{B_2^1(0,s_0)}\nonumber
    \\
    + \norm{\phi^n}_{L_2(0,s_0)}^2
    + \norm{f}_{L_2(D)}^2
    + \norm{g^n}_{B_2^{1/4}(0,T)}
    + \norm{\phi}_{B_2^1(0,s_0)}
    + \norm{p}_{\pfunctionspace(D_{\delta})}^2\nonumber
    \\
    + \norm{{\chi\big(s^{n}(t),t\big)} - {\gamma\big(s^{n}(t),t\big)(s^{n})'(t)}}_{B_2^{1/4}(0,T)}
    + \sum_{k=1}^{n-1}\Ind_{+}(s_{k+1} - s_{k}) \sum_{i=m_j}^{m_{j_{k+1}}-1}h_i
    u_i^{2}(k)\bigg{\}}\label{eq:second-energy-est-final}
  \end{gather}
  where $C$ independent of $\tau$ has been used to absorb the constants on the
  left-hand side, and $\tau$ is sufficiently small as in the hypotheses of
  Theorem~\ref{thm:first-energy-est}, which
  implies~\eqref{eq:thm:second-energy-est}.
\end{proof}
As in~\cite{abdulla15}{Thm.\ 3.4}, from Theorem~\ref{thm:second-energy-est} we have
\begin{theorem}\label{lem:second-energy-est-conseq}
  Let $[v]_{n} \in \discreteControlSet{}$ for $n = 1, 2, \ldots$ be a sequence of discrete
  controls with $\bk{\P_n\big( [v]_{n}\big)}$ converging weakly to an element $v
    = (\controlVars{})$ in $\controlSpace{}$ (with $(\controlVarsStronglyConvergeWithN)$ converging strongly in $\controlSpaceStronglyConverge$ to $(\controlVarsStronglyConverge)$) and, for any $\delta>0$, define
  \[
    \Omega' = \Omega \cap \bk{x<s(t)-\delta,~0<t<T}.
  \]
  Then $\bk{\hat{u}^{\tau}(x,t;v_n)}$ converges as $\tau \to 0$ weakly in $B_{2}^{1,1}(\Omega')$ to a weak solution $u \in B_{2}^{1,1}(\Omega)$ of~\eqref{eq:intro-pde}--\eqref{eq:intro-stefan}.
  Moreover, $u$ satisfies the energy estimate
  \begin{equation}
    \norm{u}_{B_{2}^{1,1}(\Omega)}^{2}
      \leq C\Big[
    \norm{\phi}_{B_{2}^{1}(0,s_{0})}^{2}
    + \sup_n\norm{f^n}_{L_{2}(D)}^{2}
    + \norm{p}_{B_2^{0,1}(D)}^2
    + \norm{\gamma}_{\chigammaspace(D)}^{2}
    + \norm{\chi}_{\chigammaspace(D)}^{2}
    + \norm{g}_{B_{2}^{1/4}(0,T)}^{2}
    \Big]\label{eq:thm:second-energy-est-conseq}
  \end{equation}
\end{theorem}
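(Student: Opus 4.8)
The plan is to follow the scheme of~\cite{abdulla15}*{Thm.\ 3.4}: the weak $B_2^{1,0}$-convergence and the identification of the limit as a weak solution of~\eqref{eq:intro-pde}--\eqref{eq:intro-stefan} are already supplied by Theorem~\ref{thm:first-energy-est-conseq}, so the new content is only the improved interior regularity up to the free boundary and the energy estimate~\eqref{eq:thm:second-energy-est-conseq}. These will be extracted from the second energy estimate, Theorem~\ref{thm:second-energy-est}, together with the uniform convergence $s^n\to s$ and weak lower semicontinuity of norms.

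First I would check that the right-hand side of~\eqref{eq:thm:second-energy-est} is bounded uniformly in $n$. Since $\bk{\P_n([v]_n)}$ converges weakly in $\controlSpace$, the norms $\norm{\P_n([v]_n)}_{\controlSpace}$ are bounded; in particular $\norm{g^n}_{\gcontrolspace(0,T)}$, $\norm{s^n}_{\scontrolspace(0,T)}$ and $\norm{f^n}_{\fcontrolspace(D)}$ are bounded, which controls $\norm{g^n}_{B_2^{1/4}(0,T)}$ and $\norm{f^n}_{L_2(D)}$. The terms $\norm{\phi^n}_{L_2(0,s_0)}$, $\norm{\phi}_{B_2^1(0,s_0)}$ and $\norm{p}_{\pfunctionspace(D_\delta)}$ are fixed data, and, because $\gamma,\chi\in\chigammaspace(D)$ and $\P_n([v]_n)\in V_{R+1}$ for large $n$ by Lemma~\ref{thm:pn-eq-maps-props:1}, the trace estimates~\eqref{eq:second-energy-est-gamma-chi-bound} bound $\norm{\gamma(s^n(t),t)(s^n)'(t)}_{B_2^{1/4}(0,T)}$ and $\norm{\chi(s^n(t),t)}_{B_2^{1/4}(0,T)}$ uniformly by $\norm{\gamma}_{\chigammaspace(D)}$ and $\norm{\chi}_{\chigammaspace(D)}$. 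Hence $\max_k\sum_i h_i u_{ix}^2(k)$, $\tau\sum_k\sum_i h_i\tilde u_{i\bar t}^2(k)$ and $\tau^2\sum_k\sum_i h_i\tilde u_{ix\bar t}^2(k)$ are bounded uniformly in $n$.

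Next, by Corollary~\ref{cor:pn-eq-maps-props:2} and the compact embedding $\scontrolspace(0,T)\hookrightarrow C^1[0,T]$, the functions $s^n$ converge uniformly on $[0,T]$ to $s$. Therefore, for each $\delta>0$ and all large $n$, the subdomain $\Omega'=\Omega\cap\bk{x<s(t)-\delta,~0<t<T}$ lies strictly inside $\bk{0<x<s^n(t)}$, away from the reflected continuation~\eqref{Eq:W:1:14}, so over $\Omega'$ the interpolations $\hat u^\tau$ and $\hat u_{\bar t}^\tau$ are built only from the genuine nodes $0\le i\le m_j$. Converting the discrete sums of the previous paragraph into integral norms — using $\Delta=O(\sqrt\tau)$ from~\eqref{eq:htau-implication} to pass between the grid quantities and $\hat u_x^\tau$, and absorbing the discrepancy between $\hat u^\tau$ and $\hat u_{\bar t}^\tau$ through $\tau^2\sum h_i\tilde u_{ix\bar t}^2$ — yields a bound on $\norm{\hat u^\tau}_{B_2^{1,1}(\Omega')}$ uniform in $n$. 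Extracting a weakly convergent subsequence in $B_2^{1,1}(\Omega')$ and then diagonalizing over a sequence $\delta=\delta_m\downarrow 0$ produces a limit $u$, defined on all of $\Omega$, with $u\in B_2^{1,1}(\Omega')$ for every $\delta>0$; since $\hat u^\tau$ and $u^\tau$ have the same weak-$B_2^{1,0}$ limit, Theorem~\ref{thm:first-energy-est-conseq} identifies $u$ as the weak solution of~\eqref{eq:intro-pde}--\eqref{eq:intro-stefan}, and passing to the limit upgrades it to $u\in B_2^{1,1}(\Omega)$.

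Finally, the estimate~\eqref{eq:thm:second-energy-est-conseq} follows by passing to the limit in~\eqref{eq:thm:second-energy-est} (equivalently in~\eqref{eq:second-energy-est-final}): the left-hand side is weakly lower semicontinuous; $\norm{\phi^n}_{L_2(0,s_0)}^2$ and $\norm{\phi}_{B_2^1(0,s_0)}^2$ are both absorbed into $\norm{\phi}_{B_2^1(0,s_0)}^2$; $\norm{f^n}_{L_2(D)}^2\le\sup_n\norm{f^n}_{L_2(D)}^2$; $\norm{g^n}_{B_2^{1/4}(0,T)}\to\norm{g}_{B_2^{1/4}(0,T)}$ by weak convergence of $g^n$ in $\gcontrolspace(0,T)$ together with the compact embedding $\gcontrolspace(0,T)\hookrightarrow B_2^{1/4}(0,T)$; the $\gamma$- and $\chi$-traces are replaced by $\norm{\gamma}_{\chigammaspace(D)}^2$ and $\norm{\chi}_{\chigammaspace(D)}^2$ via~\eqref{eq:second-energy-est-gamma-chi-bound}; and the free-boundary term $\sum_k\Ind_{+}(s_{k+1}-s_k)\sum_{i=m_j}^{m_{j_{k+1}}-1}h_i u_i^2(k)$ is bounded by means of the first energy estimate~\eqref{eq:thm:first-energy-est-2}, exactly as in Theorem~\ref{thm:first-energy-est-conseq}. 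I expect the main obstacle to be the third paragraph: verifying that on $\Omega'$ the interpolations are untouched by the reflection~\eqref{Eq:W:1:14} and that the active-node index $m_j$ is handled consistently as the discrete free boundary moves outward — this is where the uniform convergence $s^n\to s$, the Lipschitz bound of Corollary~\ref{cor:pn-eq-maps-props:2}, and the mesh condition~\eqref{eq:htau-implication} must be combined with care.
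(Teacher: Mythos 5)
Your proposal is correct and follows essentially the same route the paper takes: the paper derives this theorem directly from the second energy estimate (Theorem~\ref{thm:second-energy-est}) by the argument of~\cite{abdulla15}*{Thm.\ 3.4}, i.e.\ uniform bounds on the right-hand side of~\eqref{eq:thm:second-energy-est}, conversion of the discrete sums to $B_2^{1,1}(\Omega')$ bounds on the interpolations away from the reflected continuation, identification of the weak limit with the solution already produced by Theorem~\ref{thm:first-energy-est-conseq}, and weak lower semicontinuity to obtain~\eqref{eq:thm:second-energy-est-conseq}. Your elaboration of the uniform trace bounds via~\eqref{eq:second-energy-est-gamma-chi-bound} and of the passage $\delta_m\downarrow 0$ matches the intended argument.
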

Theorem~\ref{lem:second-energy-est-conseq} implies the following
\begin{corollary}\label{cor:second-energy-est-conseq-2}
  For any $v \in {V_R}$, there exists a weak solution $u \in B_{2}^{1,1}(\Omega)$ of the Neumann problem~\eqref{eq:intro-pde}--\eqref{eq:intro-stefan} satisfying the energy estimate~\eqref{eq:thm:second-energy-est-conseq}.
  By Sobolev extension theorem, $u$ may be extended to a $B_2^{1,1}(D)$ function with norm preservation, so it satisfies the energy estimate
  \begin{equation*}
      \norm{u}_{B_{2}^{1,1}(D)}^{2}
        \leq C\Big[
      \norm{\phi}_{B_{2}^{1}(0,s_{0})}^{2}
      + \norm{f}_{L_{2}(D)}^{2}
      + \norm{p}_{\pfunctionspace(D_{\delta})}^2
      + \norm{\gamma}_{\chigammaspace(D)}^{2}
      + \norm{\chi}_{\chigammaspace(D)}^{2}
      + \norm{g}_{B_{2}^{1/4}(0,T)}^{2}
      \Big]
  \end{equation*}
\end{corollary}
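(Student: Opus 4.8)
\emph{Proof plan.}
I would obtain the solution and the $B_2^{1,1}(\Omega)$ bound as a direct application of Theorem~\ref{lem:second-energy-est-conseq}, and then transfer to $D$ by a Besov extension theorem. Fix $v=(\controlVars{})\in V_R$. Since the second-difference term in $\norm{[s]_n}_{\sdiscretecontrolspace}$ may slightly overshoot $\norm{s}_{\scontrolspace(0,T)}$, the discrete sampling $\Q{}_n(v)$ need not lie in $\discreteControlSet{}$; however Lemma~\ref{thm:pn-eq-maps-props:1}, applied with $R$ replaced by $R+1$ and $\epsilon=1$, gives $\Q{}_n(v)\in V_{R+1}^n$ for all $n$ beyond some $n_1$, and the proofs of Theorems~\ref{thm:second-energy-est} and~\ref{lem:second-energy-est-conseq} go through verbatim with the radius enlarged to $R+1$ (only the constant $C$ changes, and it still depends only on the fixed data). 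Moreover $\P{}_n\bigl(\Q{}_n(v)\bigr)\to v$ strongly in $\controlSpace{}$: the $s$-component converges in $\scontrolspace(0,T)$ because the quadratic interpolation~\eqref{Eq:W:1:11} reproduces the relevant difference quotients, the $g$-component converges in $\gcontrolspace(0,T)$ for the analogous reason, the $f$-component converges in $\fcontrolspace(D)$ since piecewise-constant cell averages converge in $L_2$, and the $b$- and $c$-components converge in $\bcontrolspace(D)$ as partial sums of their expansions in the orthonormal basis $\{\psi_k\}$. Hence the hypotheses of (the $V_{R+1}^n$-version of) Theorem~\ref{lem:second-energy-est-conseq} hold for the sequence $[v]_n:=\Q{}_n(v)$, producing a weak solution $u=u(\cdot,\cdot;v)\in B_2^{1,1}(\Omega)$ of~\eqref{eq:intro-pde}--\eqref{eq:intro-stefan} that satisfies~\eqref{eq:thm:second-energy-est-conseq}. (Alternatively one avoids enlarging the radius by approximating $v$ from inside: write $v_\lambda=(1-\lambda)v^*+\lambda v$ with $v^*\in V_R$, $\norm{v^*}_{\controlSpace}<R$, so $v_\lambda\in V_{R-\epsilon(\lambda)}$, apply the above to each $v_\lambda$, and let $\lambda\uparrow 1$ using weak lower semicontinuity of the $B_2^{1,1}$-norm and continuity of the right-hand side of~\eqref{eq:thm:second-energy-est-conseq} in $v$.)

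For the extension step, I would note that for every $v\in V_R$ the lateral boundary $x=s(t)$ of $\Omega$ is the graph of a function with $\norm{s}_{\scontrolspace(0,T)}\le R$, hence of uniformly bounded $C^{1,1/2}[0,T]$-norm via the embedding $\scontrolspace(0,T)\hookrightarrow C^{1,1/2}[0,T]$, together with $s\ge\delta$; thus the family $\{\Omega:v\in V_R\}$ consists of uniformly Lipschitz domains (indeed $C^1$ away from the two corners) with constants depending only on $R$, $\delta$, $T$. By the extension theorem for the Besov space $B_2^{1,1}$ on such domains~\cites{besov79,nikolskii75,solonnikov64,solonnikov65} there is a bounded linear extension operator $E\colon B_2^{1,1}(\Omega)\to B_2^{1,1}(D)$ with $\norm{Eu}_{B_2^{1,1}(D)}\le C\norm{u}_{B_2^{1,1}(\Omega)}$, $C=C(R,\delta,T)$. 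Applying $E$ to the solution $u$ above, writing again $u$ for $Eu$, and chaining with~\eqref{eq:thm:second-energy-est-conseq} (whose right-hand side involves only the fixed data $\phi,f,p,\gamma,\chi,g$) yields the asserted estimate on $D$.

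The main obstacle is the book-keeping that makes the extension constant uniform over $V_R$: one must check that the curvilinear domains arising from controls in $V_R$ admit a common extension-operator bound, which is exactly where the a priori control $\norm{s}_{\scontrolspace(0,T)}\le R$ together with $s\ge\delta$ is used, through the embedding into $C^{1,1/2}$. If instead one runs the interior-approximation variant, the delicate point shifts to the passage to the limit $\lambda\uparrow 1$ in the integral identity of Definition~\ref{defn:intro-weak-w211} over the moving domains $\Omega_\lambda$, which requires the uniform convergence $s_\lambda\to s$ (available from $\scontrolspace(0,T)\hookrightarrow C^1$) to control both the domains and the traces on the free boundary.
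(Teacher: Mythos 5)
Your proposal is correct and follows essentially the same route as the paper: Corollary~\ref{cor:second-energy-est-conseq-2} is obtained there by applying Theorem~\ref{lem:second-energy-est-conseq} to the discrete approximating sequence $\Q{}_n(v)$ and then invoking the Sobolev/Besov extension theorem to pass from $\Omega$ to $D$. The paper leaves the details implicit, and your elaborations (handling the slight overshoot of the control norm under $\Q{}_n$, the strong convergence $\P{}_n(\Q{}_n(v))\to v$, and the uniformity of the extension constant over the family of domains arising from $V_R$ via $\scontrolspace(0,T)\hookrightarrow C^{1,1/2}$ and $s\geq\delta$) are faithful fillings-in of exactly those gaps.
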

\subsection{Existence and Convergence Results}\label{sec:existence-and-convergence}
\begin{proof}[Proof of Theorem~\ref{thm:existence-opt-control}]
  Let $\bk{v_{n}}\in {V_R}$ be a minimizing sequence for $\J$.
  Since ${V_R}$ is bounded in the Hilbert space $\controlSpace$, $v_{n}=(\controlVarsWithNSub)$ is weakly precompact in $\controlSpaceWeaklyConverge{}$.
  Assume that $v_{n}\to v=(\controlVars{})\in {V_R}$ weakly in $\controlSpaceWeaklyConverge{}$,
  and hence $(\controlVarsStronglyConverge{})$ converge strongly in
  $\controlSpaceStronglyConverge{}$.
  Let $u_{n},u \in B_{2}^{1,1}(D)$ be the corresponding solutions to the Neumann problem~\eqref{eq:intro-pde}--\eqref{eq:intro-stefan} in $B_{2}^{1,1}(\Omega_{n})$ and $B_{2}^{1,1}(\Omega)$, respectively, where
  \[
    \Omega_{n}=\bk{(x,t):0<x<s_{n}(t),~0<t<T}.
  \]
  $u_{n}$ and $u$ satisfy the
  estimate~\eqref{eq:thm:second-energy-est-conseq} with $(g_{n}, f_n)$ and
  $(g, f)$ respectively.
  Since $v_{n}\in V_{R}$, $u_{n}$ is in fact uniformly bounded in
  $B_{2}^{1,1}(D)$.
  Considering the sequence
  \(
    \Delta u = \Delta u_{n}= u_{n}-u,
  \)
  from Lemma~\ref{lem:second-energy-est-conseq} we have the rough estimate
  $\norm{\Delta u}_{B_{2}^{1,1}(D)} \leq C$ uniformly with respect to $n$.
  Therefore, $\bk{\Delta u}$ is weakly precompact in $B_{2}^{1,1}(D)$.

  Without loss of generality, assume that $u_{n}-u$ converges weakly in $B_{2}^{1,1}(D)$ to an element $w \in B_{2}^{1,1}(D)$.
  Assume temporarily that the fixed test function $\Phi \in C^1(\bar{D})$.
  Subtracting the integral identities satisfied by $u_{n}$ and $u$,
  we see that $\Delta u = u_{n}-u$ satisfies
  \begin{gather}
    0=\int_{0}^{T}\int_{0}^{s(t)}\big[ a{\Delta u}_{x}\Phi_{x}-b {\Delta u}_{x}\Phi -
    c {\Delta u}\Phi + {\Delta u}_{t}\Phi\big]\, dx\,dt
    + I_1 + I_2 + I_3 + I_4 + I_5,\label{eq:identity-for-limitpt-w}
    \intertext{where}
    I_1 := \int_{0}^{T}\int_{0}^{s(t)}\big[-\left( b_n - b \right) u_{n,x}\Phi -
      \left(c_n - c\right) u_{n}\Phi + \left(f_n - f\right) \Phi \big]\, dx\,dt\nonumber
    \\
    I_2 := -\int_{0}^{T}\int_{s_{n}(t)}^{s(t)}\big[ au_{n,x}\Phi_{x}-b_n u_{n,x}\Phi -
      c_n u_{n}\Phi + u_{n,t}\Phi + f_n \Phi + p \Phi_x\big]\, dx\,dt\nonumber
    \\
    I_3 := \int_{0}^{T}\left[ \gamma\big(s_{n}(t),t\big)s_{n}'(t)-\chi\big(
    s_{n}(t),t\big)\right]\left(  \Phi(s_{n}(t),t)-\Phi(s(t),t)\right)\,dt\nonumber
    \\
    I_4 := \int_{0}^{T}\Big{\{}  \left[ \gamma\big(s_{n}(t),t\big)s_{n}'(t)-\chi\big(
    s_{n}(t),t\big)\right]-\left[ \gamma\big(s(t),t\big)s'(t)-\chi\big(
      s(t),t\big)\right]\Big{\}}\Phi(s(t),t)\,dt\nonumber
    \\
    I_5:= \int_{0}^{T}\left[ g_n(t)-g(t) \right]\Phi(0,t)\,dt
  \end{gather}

  for arbitrary fixed $\Phi \in C^1(\bar{D})$.
  Each of the terms $I_{1},\ldots,I_5$ vanish as $n\to\infty$.
  For example, by CBS inequality
  \begin{gather}
    \lnorm{\int_{0}^{T}\int_{0}^{s(t)}\left( b_n - b \right) u_{n,x}\Phi
      \,dx \,dt} \leq \norm{b_n - b}_{L_2(D)} \norm{u_{n,x}}_{L_2(D)}
    \norm{\Phi}_{C(D)} \to 0~\text{as}~n \to \infty
  \end{gather}
  Which follows from uniform boundedness of $u_{n}\in B_{2}^{1,1}(D)$ and strong
  convergence of $b_n$ to $b$ in $L_2(D)$.
  The other two terms in $I_1$ are estimated in a similar way to show
  $\lnorm{I_1} \to 0$ as $n \to \infty$.
  Each term in $I_2$ is handled using CBS inequality as well:
  \begin{gather}
    \lnorm{\int_{0}^{T}\int_{s(t)}^{s_{n}(t)} au_{n,x}\Phi_{x} \,dx \,dt}
    \leq M \norm{\Phi_x}_{C(D)} \norm{s_n-s}_{C[0,T]}^{1/2} \norm{u_n}_{B_2^{1,0}(D)}\to 0~\text{as}~n \to \infty\nonumber
  \end{gather}
  Which follows from uniform boundedness of $u_{n}\in B_{2}^{1,0}(D)$ and uniform convergence of $s_{n}\to s$ on $[0,T]$.
  Treating each term in $I_2$ similarly, it follows that $\lnorm{I_{2}}\to 0$ as
  $n\to\infty$.
  Similarly, CBS inequality, continuity of the $L_2$ norm with respect to shift and uniform convergence of $s_n \to s$ imply $ \lnorm{I_{3}} \to 0$ and $\lnorm{I_4} \to 0$ as $n\to\infty$.
  Lastly, convergence of $g_{n}\to g$ strongly in $L_{2}(0,T)$ implies $\lnorm{I_{5}}\to 0$ as $n \to \infty$.

  Therefore, passing to the limit as $n\to\infty$ in~\eqref{eq:identity-for-limitpt-w} we see that the limit point $w$ satisfies
  \begin{equation*}
    0=\int_{0}^{T}\int_{0}^{s(t)}\big[ a  w_{x}\Phi_{x}-b w_{x}\Phi-c w \Phi + w_{t}\Phi \big]\, dx\,dt,
    \quad \forall \Phi \in C^1(\bar{D})
  \end{equation*}
  By extension of arbitrary $\Phi \in B_{2}^{1,1}(\Omega)$ to $B_2^{1,1}(D)$ and the density of $C^1(\bar{D})$ in $B_2^{1,1}(D)$, it follows that $w$ solves the Neumann problem~\eqref{eq:intro-pde}--\eqref{eq:intro-stefan} with $f=p=g=\gamma=\chi\equiv 0$.
  By the uniqueness of the solution to the Neumann problem
  it follows that $u_{n} \to u$ weakly in $B_{2}^{1,1}(D)$.
  By the Sobolev trace theorem~\cites{besov79,besov79a,nikolskii75}, CBS and Morrey inequalities it easily follows that
  \begin{equation*}
    \norm{u_{n}(x,T)-u(x,T)}_{L_{2}(0,s_{n}(T))}\to 0,\quad \norm{u_{n}(s_n(t),t)-u(s(t),t)}_{L_{2}(0,T)} \to 0~\text{as}~n\to\infty.
  \end{equation*}
  Therefore,
  \(
    \J(v)=\lim_{n\to\infty}\J(v_{n}) = J_{*}
  \)
  and $v \in V_{*}$.
  Theorem is proved.
\end{proof}
%
\begin{lemma}\label{lem:convergence-1}
  For $\epsilon>0$ define
  \(
    J_*(\pm \epsilon) = \inf_{V_{R\pm \epsilon}} \J(v).
    \)
    Then
  \(
    \lim_{\epsilon \to 0} J_*(\epsilon)
    = J_*
    = \lim_{\epsilon \to 0} J_*(-\epsilon)
  \)
\end{lemma}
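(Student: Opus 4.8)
The plan is to reduce the statement, by an elementary monotonicity observation, to two one-sided approximation facts, one proved by weak compactness and the other by an explicit inward perturbation of controls. First I would record the inclusions $V_{R-\epsilon}\subset V_R\subset V_{R+\epsilon}$, which give $J_*(-\epsilon)\ge J_*\ge J_*(\epsilon)$ and show that $\epsilon\mapsto J_*(\epsilon)$ is non-decreasing and $\epsilon\mapsto J_*(-\epsilon)$ non-increasing as $\epsilon\downarrow 0$; hence both one-sided limits exist, and it remains to prove $\lim_{\epsilon\to 0}J_*(\epsilon)\ge J_*$ and $\lim_{\epsilon\to 0}J_*(-\epsilon)\le J_*$.

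For the first inequality I would argue by compactness. Fix $\epsilon_n\downarrow 0$ and near-minimizers $v_n\in V_{R+\epsilon_n}$ with $\J(v_n)\le J_*(\epsilon_n)+\epsilon_n$; since $\norm{v_n}_{\controlSpace}\le R+\epsilon_1$, the sequence is weakly precompact, so along a subsequence $v_n$ converges weakly in $\controlSpaceWeaklyConverge$ to some $\bar v$, with $(\controlVarsStronglyConvergeWithN)$ converging strongly in $\controlSpaceStronglyConverge$ and $s^n$ converging in $C^1[0,T]$ via the compact embedding $\scontrolspace(0,T)\hookrightarrow C^1[0,T]$. Weak lower semicontinuity of the norm gives $\norm{\bar v}_{\controlSpace}\le R$, while the constraints $s(0)=s_0$, $s'(0)=0$ and $s\ge\delta$ pass to the limit by uniform convergence, so $\bar v\in V_R$. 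Arguing exactly as in the proof of Theorem~\ref{thm:existence-opt-control} (with $R$ replaced by $R+\epsilon_1$) --- subtracting integral identities, using the uniform energy estimate to pass to a weak $B_2^{1,1}(D)$-limit of the states and Sobolev trace convergence for the terminal and boundary cost terms --- one gets $\J(\bar v)=\lim_n\J(v_n)$; since $\lnorm{\J(v_n)-J_*(\epsilon_n)}\le\epsilon_n$, this limit equals $\lim_{\epsilon\to 0}J_*(\epsilon)$, whence $J_*\le\J(\bar v)=\lim_{\epsilon\to 0}J_*(\epsilon)$.

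For the second inequality I would approximate an arbitrary $v=(\controlVars)\in V_R$ from inside $V_{R-\epsilon}$ by convex interpolation with the trivial control. Using the (standard and harmless) nondegeneracy hypothesis that this trivial control $(s_0,0,0,0,0)$, with $s_0$ denoting the constant function, lies strictly inside $V_R$, i.e.\ $R>\sqrt T\,s_0$, set
\[
  v_\sigma=\big((1-\sigma)s+\sigma s_0,\ (1-\sigma)g,\ (1-\sigma)f,\ (1-\sigma)b,\ (1-\sigma)c\big),\qquad 0<\sigma<1.
\]
Its free-boundary component is a convex combination of functions $\ge\delta$ that satisfy $s(0)=s_0$ and $s'(0)=0$, so those constraints persist, and the triangle inequality gives $\norm{v_\sigma}_{\controlSpace}\le (1-\sigma)R+\sigma\sqrt T\,s_0 = R-\sigma(R-\sqrt T\,s_0)$. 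Hence $\sigma_\epsilon:=\epsilon/(R-\sqrt T\,s_0)\to 0$ yields $v_{\sigma_\epsilon}\in V_{R-\epsilon}$ for all small $\epsilon$. Since $v_{\sigma_\epsilon}\to v$ strongly in $\controlSpace$ as $\epsilon\to 0$ --- a fortiori in the sense required for the continuity of $\J$ established within the proof of Theorem~\ref{thm:existence-opt-control} --- we get $\J(v_{\sigma_\epsilon})\to\J(v)$, so $\lim_{\epsilon\to 0}J_*(-\epsilon)\le\limsup_{\epsilon\to 0}\J(v_{\sigma_\epsilon})=\J(v)$; taking the infimum over $v\in V_R$ gives $\lim_{\epsilon\to 0}J_*(-\epsilon)\le J_*$, and the lemma follows.

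I expect the only real obstacle to be the inward perturbation in the last step: one must strictly decrease the $\controlSpace$-norm while simultaneously preserving the pinned data $s(0)=s_0$, $s'(0)=0$ and the positivity $s\ge\delta$, and still converge to $v$ strongly enough for $\J$ to be continuous. Convex interpolation with the trivial free boundary $s\equiv s_0$ achieves all of this at once, at the price of the normalization $R>\sqrt T\,s_0$ (which merely says the ball is large enough to have nonempty relative interior after imposing the affine constraints on $s$). Everything else is weak compactness, lower semicontinuity of the norm, and the control-to-cost continuity already established for Theorem~\ref{thm:existence-opt-control}.
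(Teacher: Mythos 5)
Your argument is correct and is essentially the standard one that the paper itself does not spell out but defers to (it only cites \cite{abdulla13}*{Lem.\ 3.9}): monotonicity of $\epsilon\mapsto J_*(\pm\epsilon)$ from the nesting $V_{R-\epsilon}\subset V_R\subset V_{R+\epsilon}$, weak compactness plus the control-to-cost continuity from the existence theorem for the $+\epsilon$ side, and an inward perturbation toward the constant boundary $s\equiv s_0$ for the $-\epsilon$ side; the interpolation $\tilde s=s_0+(1-\sigma)(s-s_0)$ is exactly the device used in the cited reference to respect the pinned data $s(0)=s_0$, $s'(0)=0$. The one point to flag is your normalization $R>\sqrt{T}\,s_0$. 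Some such nondegeneracy is genuinely unavoidable: if every $v\in V_R$ had $\norm{v}_{\controlSpace}=R$, then $V_{R-\epsilon}$ would be empty and $J_*(-\epsilon)=+\infty$, so the lemma silently assumes $V_R$ contains a point of norm strictly less than $R$. You could state your step in that sharper form --- pick any $v^\circ=(s^\circ,g^\circ,f^\circ,b^\circ,c^\circ)\in V_R$ with $\norm{v^\circ}_{\controlSpace}<R$ and interpolate toward $v^\circ$ rather than toward the trivial control; since the constraints $s(0)=s_0$, $s'(0)=0$, $s\geq\delta$ cut out a convex set, the same computation goes through and you avoid imposing the specific inequality $R>\sqrt{T}\,s_0$, which is sufficient but not necessary. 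Everything else (the $C^1$ convergence of $s^n$ via $\scontrolspace(0,T)\hookrightarrow C^1[0,T]$, weak lower semicontinuity of the max-of-norms, and the reuse of the continuity of $\J$ along weakly convergent control sequences) is sound as written.
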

Lemma~\ref{lem:convergence-1} is established as in~\cite{abdulla13}*{Lem.\ 3.9}
\begin{lemma}\label{lem:convergence-2}
  For $v \in V_R$,
  \(
    \lim_{n\to\infty}\fIn(\Q{}_n(v))
    = \J(v)
  \)
\end{lemma}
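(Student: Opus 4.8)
The plan is to show that as $n\to\infty$, the discrete cost functional $\fIn$ evaluated at the discretization $\Q{}_n(v)$ converges to the continuous cost $\J(v)$, by establishing that each of the three terms in $\fIn$ (see \eqref{Eq:W:1:15}) converges to the corresponding term in $\J$ (see \eqref{eq:opt-functional}). The first step is to set $v^n = \P{}_n(\Q{}_n(v))$ and observe, using Lemma~\ref{thm:pn-eq-maps-props:1}, that for $n$ large $v^n \in V_{R+\epsilon}$ and that $v^n \to v$ weakly in $\controlSpaceWeaklyConverge{}$, with the components $(\controlVarsStronglyConvergeWithN)$ converging strongly in $\controlSpaceStronglyConverge{}$ and $s^n \to s$ uniformly on $[0,T]$; these are standard properties of the interpolation maps established exactly as in~\cite{abdulla13}, exploiting that $\Q{}_n$ samples the smoother components and takes Steklov/Fourier averages of the rougher ones. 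Then Theorem~\ref{lem:second-energy-est-conseq} applies: the piecewise-linear interpolations $\hat{u}^\tau$ of the discrete state vectors converge weakly in $B_2^{1,1}(\Omega')$ (for every $\delta$) to the weak solution $u=u(x,t;v)$ of the Neumann problem, and the analogous consequence of Theorem~\ref{thm:first-energy-est-conseq} gives weak $B_2^{1,0}$ convergence of $u^\tau$ up to the free boundary.

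Next I would pass to the limit term by term. For the $\beta_2$ term this is immediate: $\Q{}_n(v)$ has $s_n = s(t_n) = s(T)$, so $\lnorm{s_n-\bar s}^2 = \lnorm{s(T)-\bar s}^2$ exactly. For the $\beta_0$ term, I would rewrite $\sum_{i=0}^{m_n-1} h_i(u_i(n)-w_i)^2$ as an integral of the piecewise-constant interpolation $\tilde u^\tau(\cdot,T)$ minus $\phi^n$-type averages of $w$, and compare it to $\int_0^{s(T)}\lnorm{u(x,T)-w(x)}^2\,dx$; convergence follows from strong $L_2(0,s(T)-\delta)$ convergence of the traces $\hat u^\tau(\cdot,t)\to u(\cdot,t)$ near the final time (obtained from weak $B_2^{1,1}(\Omega')$ convergence together with compactness of the trace map $B_2^{1,1}\to C([0,T];L_2)$), convergence of the Steklov averages $w_i$ to $w$ in $L_2$, uniform convergence $s_n\to s$ together with the uniform $L_2$ bound on $u^\tau$ to control the thin strip $(s(T)-\delta, s_n(T))$, and finally letting $\delta\to 0$. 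For the $\beta_1$ term, $\tau\sum_{k=1}^n(u_{m_k}(k)-\mu_k)^2$ is recognized as $\int_0^T \lnorm{u^\tau(s^n(t),t) - \mu^\tau(t)}^2\,dt$ up to the relabeling $u_{m_k}(k)=\hat u^\tau(s^n(t),t)$ on $(t_{k-1},t_k]$; convergence to $\int_0^T\lnorm{u(s(t),t)-\mu(t)}^2\,dt$ follows from strong convergence of the boundary traces $u^\tau(s^n(t),t)\to u(s(t),t)$ in $L_2(0,T)$ — which is established in the proof of Theorem~\ref{thm:existence-opt-control} via the Sobolev trace theorem, CBS, and Morrey inequalities, combined with uniform convergence of $s^n\to s$ — and from $L_2$ convergence of the Steklov averages $\mu_k$ to $\mu$.

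The main obstacle is the boundary-trace term ($\beta_1$): one must argue that the discrete boundary values $u_{m_k}(k)$, which live on the \emph{moving} discrete grid, converge in $L_2(0,T)$ to $u(s(t),t)$. This requires both the second energy estimate (Theorem~\ref{thm:second-energy-est}), to get enough regularity of $\hat u^\tau$ near $x=s^n(t)$ for the trace to be well-behaved, and a careful comparison of the trace on $x=s^n(t)$ with the trace of the limit $u$ on $x=s(t)$ using the uniform Lipschitz bound \eqref{eq:control-lipschitz-condition} on $s^n$ and continuity of $L_2$-valued traces with respect to curve perturbations; the argument is the discrete analogue of the $I_3,I_4$ estimates already carried out in the proof of Theorem~\ref{thm:existence-opt-control}. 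Once these convergences are in hand, adding the three limits gives $\lim_{n\to\infty}\fIn(\Q{}_n(v)) = \J(v)$, completing the proof.
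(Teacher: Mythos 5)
Your proposal is correct and follows essentially the same route as the paper's proof: term-by-term passage to the limit, splitting the $\beta_0$ sum into an interior part (handled via weak $B_2^{1,1}(\Omega_m)$ convergence of $\hat{u}^\tau$ from Theorem~\ref{lem:second-energy-est-conseq}, trace embeddings, and the equivalence of $u^\tau$ and $\hat{u}^\tau$) plus thin-strip terms near $x=s(T)$ controlled by the energy estimates, Morrey's inequality, absolute continuity of the integral, and uniform convergence $s^n\to s$, then letting the strip width tend to zero. Your additional observations — that the $\beta_2$ term is exactly equal since $s_n=s(T)$ under $\Q_n$, and the more explicit treatment of the moving-boundary trace in the $\beta_1$ term — are consistent with the paper, which treats only the $\beta_0$ term in detail and declares the others similar.
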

\begin{proof}
  Fix $v \in V_R$ and let $[v]_n = (\discreteControlVars) = \Q{}_n(v)$.
  Let $u = u(x,t;v)$ and $\big[ u([v]_n)\big]_n$ be the corresponding continuous
  and discrete state vector, respectively, and denote by $v^{n} =
    (\controlVarsWithN) = \P{}_{n}({[v]}_{n})$.
  By Sobolev embedding theorem, $s^{n}(t) \to s(t)$ uniformly on $[0,T]$.
  Let $\epsilon_m \downarrow 0$ be an arbitrary sequence, and define
  \[
    \Omega_m = \bk{(x,t):0 < x < s(t) - \epsilon_m, 0 < t \leq T}
  \]
  and fix $m>0$.

  In Theorem~\ref{lem:second-energy-est-conseq} it was shown that $\bk{\hat{u}^{\tau}}$
  converges to $u$ weakly in $B_2^{1,1}(\Omega_m)$ for any fixed $m$; by the
  embeddings of traces, it follows that $\bk{\hat{u}^{\tau}(s(t)-\epsilon_m,t)}$
  and $\bk{\hat{u}^\tau(x, T)}$ converge to the corresponding traces
  $u(s(t)-\epsilon_m, t)$ and $u(x,T)$ weakly in $L_2(0,T)$ and $L_2(0,
    s(t)-\epsilon_m)$, respectively.
  We shall prove that the corresponding traces of $u^\tau$ satisfy the same
  property.

  By Sobolev embedding theorem, it is enough to show that $\bk{u^\tau}$ and
  $\bk{\hat{u}^\tau}$ are equivalent in $B_2^{1,0}(\Omega_m)$.

  Denote by \(s_k^m = x_{\hat{\imath}}\) where
  \[
    \hat{\imath} =
    \max\bk{i \leq N:~
      -\epsilon_m \leq x_i - \max_{t_{k-1} \leq t \leq t_k} s(t) \leq - \frac{\epsilon_m}{2}}.
  \]
  Arguing as in~\cite{abdulla15}*{Eq.\ 101--104} it follows that there exists \(N=N(\epsilon_m)\) such that \(n>N\) implies
   \begin{equation}
    s_k^m < \min(s_k, s_{k-1}),~k=1,\ldots,n\label{eq:second-energy-est-conseq-smk-term-enough:disc}
  \end{equation}
  and accordingly
  \begin{gather}
    \norm{\D{\hat{u}^{\tau}}{x} - \D{u^{\tau}}{x}}_{L_2(\Omega_m)}^2
    = \frac{\tau^3}{3}\sum_{k=1}^{n} \sum_{i=0}^{\hat{\imath}-1}
      h_i u_{ix\bar{t}}^2(k)
    \leq \frac{\tau^3}{3}\sum_{k=1}^{n} \sum_{i=0}^{m_j-1}
      h_i \tilde{u}_{ix\bar{t}}^2(k)
     = O(\tau).
  \end{gather}
  Estimate the first term in $\fIn(\Q{}_n(v)) - \J(v)$ as
  \begin{gather}
    \lnorm{\beta_0\sum_{i=0}^{m_n-1} h_i \lnorm{u_i(n) - w_i}^2 \, dx - \beta_0\int_0^{s(T)} \lnorm{u(x,T) - w(x)}^2 \, dx}\nonumber
    \\
    \leq \beta_0 \bigg{\{}\lnorm{\sum_{i=0}^{\hat{\imath}-1} \left[ h_i
    \lnorm{u_i(n) - w_i}^2 - \int_{x_i}^{x_{i+1}}\lnorm{u(x,T) - w(x)}^2\, dx \right]}
    + I_{n,m} + \tilde{I}_{m}\bigg{\}}\label{eq:In-Qn-v-term1-1}
    \intertext{where}
    I_{n,m}
    =\lnorm{\sum_{\hat{\imath}}^{m_n - 1} h_i \lnorm{u_i(n) - w_i}^2},\quad \tilde{I}_{m}
    = \lnorm{\int_{s_n^m}^{s(T)} \lnorm{u(x,T) - w(x)}^2 \, dx}
    \end{gather}
    By absolute continuity of the integral, $\tilde{I}_m \to 0$ as $m \to \infty$.
    Considering $I_{n,m}$,
    \[
      I_{n,m}
      \leq 2 \lnorm{\sum_{\hat{\imath}}^{m_n - 1} h_i \lnorm{u_i(n)}^2} + \lnorm{\sum_{\hat{\imath}}^{m_n - 1} h_i\lnorm{w_i}^2}
    \]
    By Morrey's inequality,
  \begin{equation*}
    \lnorm{\sum_{\hat{\imath}}^{m_n - 1} h_i \lnorm{u_i(n)}^2}
    \leq C \lnorm{s^n(T)-s(T)+\epsilon_m}
    \norm{\hat{u}(x;n)}_{B_2^1(0,\l)}^2
  \end{equation*}
  From~\eqref{eq:thm:first-energy-est-2} and~\eqref{eq:thm:second-energy-est}, it follows that
  \begin{equation}
    \norm{\hat{u}(x;n)}_{B_2^1(0,\l)}^2 \leq C_1\label{eq:u-x-n-uniform-bound}
  \end{equation}
  For a constant $C_1$ depending on the given data $\phi$, $f$, etc.\ but not $\tau$ (or $m$).
  Now, considering the second term in $I_{n,m}$, by CBS inequality,
  \begin{gather*}
    \lnorm{\sum_{\hat{\imath}}^{m_n - 1} h_i \lnorm{w_i}^2}
    = \lnorm{\sum_{\hat{\imath}}^{m_n - 1} \frac{1}{h_i} \lnorm{\int_{x_i}^{x_{i+1}}w(x)
    \,dx}^2}
    \leq \lnorm{\int_{s(T)}^{s^n(T)} \lnorm{w(x)}^2 \,dx}
    +\lnorm{\int_{s(T)-\epsilon_m}^{s(T)} \lnorm{w(x)}^2 \,dx}
  \end{gather*}
  By absolute continuity of the integral and convergence $s^n(T) \to s(T)$, it
  follows that there is some $N_1=N_1(m)$ such that for $n>N_1$,
  \begin{equation}
    \lnorm{\sum_{\hat{\imath}}^{m_n - 1} h_i \lnorm{w_i}^2}
    \leq 2 \int_{s(T)-\epsilon_m}^{s(T)}  \lnorm{w(x)}^2 \,dx
    + \frac{1}{m}\label{eq:conv-Inm-term2-bound}
  \end{equation}
  By~\eqref{eq:u-x-n-uniform-bound} and~\eqref{eq:conv-Inm-term2-bound}, it follows that for $n>N_1$
  \begin{gather}
    0
    \leq I_{n,m}
    \leq C C_1 \left( \epsilon_m + \lnorm{s^n(T)-s(T)} \right)
    + 2 \int_{s(T)-\epsilon_m}^{s(T)} \lnorm{w(x)}^2 \,dx + \frac{1}{m}\label{eq:conv-Inm-bound}
  \end{gather}
  By~\eqref{eq:In-Qn-v-term1-1} and~\eqref{eq:conv-Inm-bound}, it follows that
  \begin{gather*}
    0 \leq \limsup_{n\to\infty}\lnorm{\beta_0\sum_{i=0}^{m_n-1} h_i \lnorm{u_i(n) - w_i}^2 \, dx - \beta_0\int_0^{s(T)} \lnorm{u(x,T) - w(x)}^2 \, dx} \nonumber
    \\
    \leq C C_1\epsilon_m
    + 2 \int_{s(T)-\epsilon_m}^{s(T)}  \lnorm{w(x)}^2 \,dx + \frac{1}{m} + \tilde{I}_m
  \end{gather*}
  for all $m$.
  Passing to the limit as $m \to \infty$ it follows that
  \begin{equation*}
    \lim_{n\to\infty}\beta_0\sum_{i=0}^{m_n-1} h_i \lnorm{u_i(n) - w_i}^2  = \beta_0\int_0^{s(T)} \lnorm{u(x,T) - w(x)}^2 \, dx
  \end{equation*}
The convergence of the second and third terms of $\fIn$ to corresponding terms in $\J$ is established in a similar way.
Lemma is proved.
\end{proof}
\begin{lemma}\label{lem:convergence-3}
  For arbitrary $[v]_n \in \discreteControlSet$,
  \(
    \lim_{n\to\infty} \left(\J\big(\P_n([v]_n)\big) - \fIn\big([v]_n \big) \right)
    = 0
  \)
\end{lemma}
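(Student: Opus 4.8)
The plan is to argue by contradiction, reducing everything to arguments already established. Suppose the claim fails: there are then $\epsilon_0>0$ and a subsequence, not relabeled, with $\lnorm{\J(\P_n([v]_n)) - \fIn([v]_n)} \geq \epsilon_0$ for all $n$. Write $v^n := \P_n([v]_n)$. By Lemma~\ref{thm:pn-eq-maps-props:1}, $v^n \in V_{R+1}$ for all large $n$, so $\{v^n\}$ is bounded in the Hilbert space $\controlSpace$; passing to a further subsequence, $v^n = (\controlVarsWithN)$ converges weakly in $\controlSpaceWeaklyConverge$ to some $v = (\controlVars{})$, with $(\controlVarsStronglyConvergeWithN)$ converging strongly in $\controlSpaceStronglyConverge$ to $(\controlVarsStronglyConverge)$ and, by the Sobolev embedding, $s^n \to s$ uniformly on $[0,T]$. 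It then suffices to prove the two limits $\J(v^n) \to \J(v)$ and $\fIn([v]_n) \to \J(v)$; together these contradict the choice of the subsequence, and the lemma follows.

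For the first limit I would repeat the argument in the proof of Theorem~\ref{thm:existence-opt-control} (with $R$ replaced by $R+1$): letting $u_n = u(\cdot; v^n)$ and $u = u(\cdot; v)$ be the continuous weak solutions of the Neumann problem~\eqref{eq:intro-pde}--\eqref{eq:intro-stefan}, the uniform bound~\eqref{eq:thm:second-energy-est-conseq} makes $\{u_n\}$ weakly precompact in $B_2^{1,1}(D)$; subtracting the integral identities satisfied by $u_n$ and $u$ and passing to the limit — using strong convergence $b^n \to b$, $c^n \to c$ and weak convergence $f^n \to f$ in $L_2(D)$, uniform convergence $s^n \to s$, and uniqueness of the Neumann solution — gives $u_n \to u$ weakly in $B_2^{1,1}(D)$; the Sobolev trace theorem, together with the CBS and Morrey inequalities, then yields $\norm{u_n(\cdot,T) - u(\cdot,T)}_{L_2(0,s_n(T))} \to 0$ and $\norm{u_n(s_n(t),t) - u(s(t),t)}_{L_2(0,T)} \to 0$, hence $\J(v^n) \to \J(v)$.

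For the second limit, note that $[u([v]_n)]_n$ is precisely the discrete state vector entering $\fIn([v]_n)$, and that by Theorems~\ref{thm:first-energy-est-conseq} and~\ref{lem:second-energy-est-conseq} its interpolations $u^\tau$ and $\hat{u}^\tau$ converge, as $\tau \to 0$, weakly in $B_2^{1,0}(D)$ and — for each fixed $m$, on the subdomain $\Omega_m$ of Lemma~\ref{lem:convergence-2} — weakly in $B_2^{1,1}(\Omega_m)$, both to $u = u(\cdot; v)$. From here the computation reproduces that of Lemma~\ref{lem:convergence-2}: the second energy estimate~\eqref{eq:thm:second-energy-est} gives $\norm{\D{\hat{u}^{\tau}}{x} - \D{u^{\tau}}{x}}_{L_2(\Omega_m)}^2 = O(\tau)$, so $u^\tau$ is equivalent to $\hat{u}^\tau$ in $B_2^{1,0}(\Omega_m)$ and hence its traces on $\bk{t = T}$ and near the free boundary inherit the weak $L_2$-convergence of the traces of $\hat{u}^\tau$; the boundary-layer contributions — the portion $\hat{\imath} \leq i \leq m_n - 1$ of the first term of $\fIn$, and the analogous piece of the second term — are bounded, using Morrey's inequality and the uniform bound~\eqref{eq:u-x-n-uniform-bound} (valid for any $[v]_n \in \discreteControlSet$), by $C\big(\epsilon_m + \lnorm{s^n(T) - s(T)}\big)$ plus tails of absolutely continuous integrals; and $s_n \to s(T)$. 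Letting $n \to \infty$ and then $m \to \infty$ shows that each of the three terms of $\fIn([v]_n)$ converges to the corresponding term of $\J(v)$, i.e.\ $\fIn([v]_n) \to \J(v)$.

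The only delicate point, I expect, is the bookkeeping of the boundary-layer terms in the last paragraph, but this is essentially a transcription of the estimates already carried out in Lemma~\ref{lem:convergence-2}; everything else is a direct appeal to the energy-estimate consequences and to the convergence argument of Theorem~\ref{thm:existence-opt-control}.
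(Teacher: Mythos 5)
Your proposal is correct and follows essentially the same route as the paper: both decompose the difference as $\big(\fIn([v]_n)-\J(v)\big)+\big(\J(v)-\J(v^n)\big)$, dispatch the second piece by the weak continuity of $\J$ from the proof of Theorem~\ref{thm:existence-opt-control}, and dispatch the first by repeating the argument of Lemma~\ref{lem:convergence-2}. Your contradiction/subsequence wrapper is just a rigorous rendering of the paper's ``assume the whole sequence converges,'' so no substantive difference.
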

\begin{proof}
  Let $[v]_n \in \discreteControlSet{}$ and $v^n = (\controlVarsWithN{}) = \P_n([v]_n)$.
  Then $\bk{\P_n([v]_n)}$ is weakly precompact in $\controlSpace{}$; assume that the whole sequence converges to $\tilde{v}=(\controlVarsWithTilde{})$.
  Then $\tilde{v} \in V_R$, and moreover, Rellich-Kondrachov compactness theorem implies that $(\controlVarsStronglyConvergeWithN{}) \to (\controlVarsStronglyConvergeWithTilde{})$ strongly in $\controlSpaceStronglyConverge{}$; in particular, $s^n \to \tilde{s}$ uniformly on $[0,T]$.
  Write the difference $\J\big(\P_n([v]_n)\big) - \fIn\big([v]_n \big)$ in the preceding notation, as
  \begin{equation*}
    \fIn\big([v]_n \big) - \J\big(\P_n([v]_n)\big)
    = \fIn\big([v]_n \big) - \J\big(v^n\big)
    = \fIn\big([v]_n \big) - \J(\tilde{v}) + \J(\tilde{v}) - \J\big(v^n\big)
  \end{equation*}
  By weak continuity of $\J$, we have
  \(
    \lim_{n\to\infty} \left(\J(\tilde{v}) - \J\big(v^n\big)\right) = 0.
  \)
  It remains to be shown that
  \begin{equation*}
    \lim_{n\to\infty} \left(\fIn\big([v]_n \big) - \J(\tilde{v})\right) = 0
  \end{equation*}
  Since $\tilde{v} \in V_{R+\epsilon}$ for some $\epsilon > 0$, and by strong convergence of $\P_n({[v]}_n) \to \tilde{v}$, a nearly identical argument to the proof of Lemma~\ref{lem:convergence-2} establishes this result.
\end{proof}
By Lemmas~\ref{lem:convergence-1}--\ref{lem:convergence-3}
and~\cite{abdulla15}*{Lem.\ 2.2}, Theorem~\ref{thm:convergence-discrete} is proved.
\section{Conclusions}\label{sec:conclusions}
Motivated by the new variational formulation of the inverse Stefan problem
and by applying the methods developed in~\cites{abdulla13,abdulla15}, identification of coefficients, heat flux, and density
of heat sources in the second order parabolic free boundary problem arising in biomedical problem on the laser ablation of tissues is analyzed in a Besov spaces framework in this paper.

The main idea of the new variational formulation is an optimal control setting,
where the free boundary, coefficients, heat flux, and heat sources are components
of the control vector.
Discretization of the variational formulation is pursued using the method of finite differences,
and convergence of the discrete optimal control problems with respect to functional and
control is proven.

This creates a rigorous basis for the development of an iterative gradient type numerical method
of low computational cost, and allows for the regularization of the error existing in the
information on the phase transition temperature and other experimental measurements.
%
\begin{bibdiv}
\begin{biblist}

\bib{abdulla13}{article}{
      author={Abdulla, U.~G.},
       title={On the optimal control of the free boundary problems for the
  second order parabolic equations. {I}. {W}ell-posedness and convergence of
  the method of lines},
        date={2013},
     journal={Inverse Problems and Imaging},
      volume={7},
      number={2},
       pages={307\ndash 340},
}

\bib{abdulla15}{article}{
      author={Abdulla, U.~G.},
       title={On the optimal control of the free boundary problems for the
  second order parabolic equations. {II}. {C}onvergence of the method of finite
  differences},
        date={2016},
     journal={Inverse Problems and Imaging},
      volume={10},
      number={4},
       pages={869 \ndash  898},
         url={http://arxiv.org/abs/1506.02341},
}

\bib{abdulla17}{article}{
      author={Abdulla, U.~G.},
      author={Cosgrove, E.},
      author={Goldfarb, J.},
       title={On the {Frechet} differentiability in optimal control of
  coefficients in parabolic free boundary problems},
        date={2017},
     journal={Evolution Equations and Control Theory},
      volume={6},
      number={3},
       pages={319\ndash 344},
}

\bib{abdulla16}{article}{
      author={Abdulla, U.~G.},
      author={Goldfarb, J.},
       title={{F}rechet differentability in {B}esov spaces in the optimal
  control of parabolic free boundary problems},
        date={2017},
     journal={To Appear In: Journal of Inverse and Ill-Posed Problems},
         url={https://arxiv.org/abs/1604.00057},
}

\bib{baumeister80}{article}{
      author={Baumeister, J.},
       title={Zur optimal {Steuerung} von frien {Randwertausgaben}},
        date={1980},
     journal={ZAMM},
      volume={60},
       pages={335\ndash 339},
}

\bib{bell81}{article}{
      author={Bell, J.~B.},
       title={The non-characteristic {Cauchy} problem for a class of equations
  with time dependence. {I.} {P}roblem in one space dimension},
        date={1981},
     journal={SIAM Journal on Mathematical Analysis},
      volume={12},
      number={5},
       pages={759\ndash 777},
}

\bib{besov79}{book}{
      author={Besov, O.~V.},
      author={Ilin, V.~P.},
      author={Nikolskii, S.~M.},
       title={Integral representations of functions and imbedding theorems},
   publisher={John Wiley \& Sons},
        date={1979},
      volume={1},
}

\bib{besov79a}{book}{
      author={Besov, O.~V.},
      author={Ilin, V.~P.},
      author={Nikolskii, S.~M.},
       title={Integral representations of functions and imbedding theorems},
   publisher={John Wiley \& Sons},
        date={1979},
      volume={2},
}

\bib{budak72}{article}{
      author={Budak, B.~M.},
      author={Vasileva, V.~N.},
       title={On the solution of the inverse {Stefan} problem},
        date={1972},
     journal={Soviet Mathematics Doklady},
      volume={13},
       pages={811\ndash 815},
}

\bib{budak73}{article}{
      author={Budak, B.~M.},
      author={Vasileva, V.~N.},
       title={On the solution of {Stefan}'s converse problem {II}},
        date={1973},
     journal={USSR Computational Mathematics and Mathematical Physics},
      volume={13},
       pages={97\ndash 110},
}

\bib{budak74}{article}{
      author={Budak, B.~M.},
      author={Vasileva, V.~N.},
       title={The solution of the inverse {Stefan} problem},
        date={1974},
     journal={USSR Computational Mathematics and Mathematical Physics},
      volume={13},
      number={1},
       pages={130\ndash 151},
}

\bib{cannon64}{article}{
      author={Cannon, J.~R.},
       title={A {Cauchy} problem for the heat equation},
        date={1964},
     journal={Annali di Matematica Pura Ed Applicata},
      volume={66},
      number={1},
       pages={155\ndash 165},
}

\bib{carasso82}{article}{
      author={Carasso, A.},
       title={Determining surface temperatures from interior observations},
        date={1982},
     journal={SIAM Journal on Applied Mathematics},
      volume={42},
      number={3},
       pages={558\ndash 574},
}

\bib{ewing79}{article}{
      author={Ewing, R.~E.},
       title={The {Cauchy} problem for a linear parabolic equation},
        date={1979},
     journal={Journal of Mathematical Analysis and Applications},
      volume={71},
      number={1},
       pages={167\ndash 186},
}

\bib{ewing79a}{article}{
      author={Ewing, R.~E.},
      author={Falk, R.~S.},
       title={Numerical approximation of a {Cauchy} problem for a parabolic
  partial differential equations},
        date={1979},
     journal={Mathematics of Computation},
      volume={33},
      number={148},
       pages={1125\ndash 1144},
}

\bib{fasano77}{article}{
      author={Fasano, A.},
      author={Primicerio, M.},
       title={General free boundary problems for heat equations},
        date={1977},
     journal={Journal of Mathematical Analysis and Applications},
      volume={57},
      number={3},
       pages={694\ndash 723},
}

\bib{friedman88}{book}{
      author={Friedman, A.},
       title={Variational principles and free boundary problems},
   publisher={Krieger Publishing Company},
        date={1988},
}

\bib{goldman97}{book}{
      author={Gol'dman, N.~L.},
       title={Inverse {Stefan} problems},
   publisher={Kluwer Academic Publishers Group},
     address={Dodrecht},
        date={1997},
}

\bib{hoffman81}{inproceedings}{
      author={Hoffman, K.~H.},
      author={Niezgodka, M.},
       title={Control of parabolic systems involving free boundaries},
        date={1981},
   booktitle={Proceedings of the international conference on free boundary
  problems},
}

\bib{hoffman82}{article}{
      author={Hoffman, K.~H.},
      author={Sprekels, J.},
       title={Real time control of free boundary in a two-phase {Stefan}
  problem},
        date={1982},
     journal={Numerical Functional Analysis and Optimization},
      volume={5},
       pages={47\ndash 76},
}

\bib{hoffman86}{article}{
      author={Hoffman, K.~H.},
      author={Sprekels, J.},
       title={On the identification of heat conductivity and latent heat
  conductivity as latent heat in a one-phase {Stefan} problem},
        date={1986},
     journal={Control and Cybernetics},
      volume={15},
       pages={37\ndash 51},
}

\bib{jochum80a}{article}{
      author={Jochum, P.},
       title={The inverse {Stefan} problem as a problem of nonlinear
  approximation theory},
        date={1980},
     journal={Journal of Approximation Theory},
      volume={30},
       pages={37\ndash 51},
}

\bib{jochum80}{article}{
      author={Jochum, P.},
       title={The numerical solution of the inverse {Stefan} problem},
        date={1980},
     journal={Numerical Mathematics},
      volume={34},
       pages={411\ndash 429},
}

\bib{knabner83}{article}{
      author={Knabner, P.},
       title={Stability theorems for general free boundary problems of the
  {Stefan} type and applications},
        date={1983},
     journal={Applied Nonlinear Functional Analysis, Methoden und Verfahren der
  Mathematischen Physik},
      volume={25},
       pages={95\ndash 116},
}

\bib{ladyzhenskaya68}{book}{
      author={Ladyzhenskaya, O.~A.},
      author={Solonnikov, V.~A.},
      author={Uraltseva, N.~N.},
       title={Linear and quasilinear equations of parabolic type},
      series={Translations of Mathematical Monographs},
   publisher={American Mathematical Society},
     address={Providence, R. I.},
        date={1968},
      volume={23},
}

\bib{lurye75}{book}{
      author={Lurye, K.~A.},
       title={Optimal control in problems of mathematical physics},
      series={Moscow},
   publisher={Nauka},
        date={1975},
}

\bib{meirmanov92}{book}{
      author={Meirmanov, A.~M.},
      editor={Niezgodka, Marek},
       title={The {Stefan} problem},
      series={De Gruyter Expositions in Mathematics},
   publisher={Walter De Gruyter},
        date={1992},
      volume={3},
}

\bib{niezgodka79}{article}{
      author={Niezgodka, M.},
       title={Control of parabolic systems with free boundaries - application
  of inverse formulation},
        date={1979},
     journal={Control and Cybernetics},
      volume={8},
       pages={213\ndash 225},
}

\bib{nikolskii75}{book}{
      author={Nikol'skii, S.~M.},
       title={Approximation of functions of several variables and imbedding
  theorems},
   publisher={Springer-Verlag},
     address={New York-Heidelberg},
        date={1975},
}

\bib{nochetto87}{article}{
      author={Nochetto, R.~H.},
      author={Verdi, C.},
       title={The combined use of nonlinear {Chernoff} formula with a
  regularization procedure for two-phase {Stefan} problems},
        date={1987/88},
     journal={Numerical Functional Analysis and Optimization},
      volume={9},
       pages={1177\ndash 1192},
}

\bib{primicerio82}{inproceedings}{
      author={Primicerio, M.},
       title={The occurence of pathologies in some {Stefan}-like problems},
organization={ISNM},
        date={1982},
   booktitle={Numerical treatment of free boundary-value problems},
      editor={Albrecht, J.},
      editor={Collatz, L.},
      editor={Hoffman, K.~H.},
      volume={58},
   publisher={Birkhauser Verlag},
     address={Basel},
       pages={233\ndash 244},
}

\bib{sagues82}{inproceedings}{
      author={Sagues, C.},
       title={Simulation and optimal control of free boundary},
organization={ISNM},
        date={1982},
   booktitle={Numerical treatment of free boundary-value problems},
      editor={Albrecht, J.},
      editor={Collatz, L.},
      editor={Hoffman, K.~H.},
      volume={58},
   publisher={Birkhauser Verlag},
     address={Basel},
       pages={270\ndash 287},
}

\bib{sherman71}{article}{
      author={Sherman, B.},
       title={General one-phase {Stefan} problems and free boundary problems
  for the heat equation with {Cauchy} data prescribed on the free boundary},
        date={1971},
     journal={SIAM J. Appl. Math.},
      volume={20},
       pages={557\ndash 570},
}

\bib{solonnikov64}{book}{
      author={Solonnikov, V.~A.},
       title={A-priori estimates for solutions of second-order equations of
  parabolic type},
      series={Trudy Matematischeskogo instituta im. V. A. Steklova},
   publisher={Nauka},
     address={Moscow-Leningrad},
        date={1964},
      volume={70},
}

\bib{solonnikov65}{article}{
      author={Solonnikov, V.~A.},
       title={On boundary value problems for linear parabolic systems of
  differential equations in general form},
        date={1965},
     journal={Proceedings of the Steklov Institute of Mathematics},
      volume={83},
       pages={1\ndash 184},
}

\bib{talenti82}{article}{
      author={Talenti, G.},
      author={Vessella, S.},
       title={A note on an ill-posed problem for the heat equation},
        date={1982June},
     journal={Journal of the Austrailian Mathematical Society},
      volume={32},
      number={3},
       pages={358\ndash 368},
}

\bib{vasilev69}{article}{
      author={Vasil'ev, F.~P.},
       title={The existence of a solution to a certain optimal {Stefan}
  problem},
        date={1969},
     journal={Computational Methods and Programming},
       pages={110\ndash 114},
}

\bib{yurii80}{article}{
      author={Yurii, A.~D.},
       title={On an optimal {Stefan} problem},
        date={1980},
     journal={Doklady Akademii nauk SSSR},
      volume={251},
       pages={1317\ndash 1321},
}

\end{biblist}
\end{bibdiv}

\end{document}